\documentclass[11pt]{amsart}
\usepackage[margin=1.3in]{geometry}
\usepackage[utf8]{inputenc}
\usepackage{amsmath}
\usepackage[pagebackref=true]{hyperref}
\usepackage{amstext}
\usepackage{amscd}
\usepackage{amsmath}
\usepackage{amsthm}
\usepackage{latexsym}
\usepackage{url}
\usepackage{amssymb}
\usepackage[all]{xy}
\usepackage{mathrsfs} 
\usepackage{color}
\usepackage{tikz}
\usepackage{indentfirst}
\usepackage{MnSymbol}
\usepackage{combelow}

\newtheorem{theorem}{Theorem}[section]
\newtheorem{thmx}{Theorem}
 
\newtheorem{lemma}[theorem]{Lemma}

\newtheorem{proposition}[theorem]{Proposition}
\newtheorem*{theorem*}{Theorem}

\theoremstyle{definition}
\newtheorem{definition}[theorem]{Definition}
\newtheorem{theorem-definition}[theorem]{Theorem-Definition}

\theoremstyle{remark}
\newtheorem{remark}[theorem]{Remark}
\newtheorem{example}[theorem]{Example}
\newtheorem{question}[theorem]{Question}

\numberwithin{equation}{section}

\hypersetup{
     colorlinks,breaklinks,
     urlcolor = [RGB]{0,50,175},
     citecolor = [RGB]{0,50,175},
     linkcolor =[RGB]{150,10,0}
}

\begin{document}
\title{Rational curves on hypersurfaces}
\author{Yuan Wang}
\subjclass[2010]{ 
14E30, 14M22.
}
\keywords{
Hypersurface, uniruled, rationally connected, minimal model program.
}
\address{Department of Mathematics, University of Utah, 155 South 1400 East, Salt Lake City, UT 84112-0090, USA}
\email{ywang@math.utah.edu}
\thanks{The author was supported in part by the FRG grant DMS-\#1265261.}
\begin{abstract}
Let $(X,D)$ be a pair where $X$ is a projective variety. We study in detail how the behavior of rational curves on $X$ as well as the positivity of $-(K_X+D)$ and $D$ influence the behavior of rational curves on $D$. In particular we give criteria for uniruledness and rational connectedness of components of $D$.
\end{abstract}
\maketitle
\section{introduction}
For a projective variety $X$, the connection between positivity of $-K_X$ and the behavior of rational curves on $X$ is well understood. Uniruledness and rational connectedness are possibly the two birational properties of smooth varieties that have been the most intensively studied. A result of Miyaoka-Mori \cite{MM86} shows that a smooth projective variety $X$ is uniruled if and only if there exists a $K_X$-negative curve through every general point of $X$. Later Boucksom-Demailly-P\u{a}un-Peternell \cite{BDPP13} proved that if the canonical divisor of a projective manifold $X$ is not pseudo-effective, then $X$ is uniruled. The rational connectedness of smooth Fano varieties was established by Campana \cite{Campana92} and Koll\'{a}r-Miyaoka-Mori \cite{KMM92}, and it was later generalized to the log Fano cases by Zhang \cite{Zhang06} and Hacon-M\textsuperscript{c}Kernan \cite{HM07}.

A natural question is how the behavior or rational curves on a variety $X$ influences the behavior of rational curves on a hypersurface $D$. An easy case is when $X=\mathbb{P}^n$, then a general hypersurface of degree $\le n$ is rationally connected. More generally if $(X,D)$ is a plt pair and $-(K_X+D)$ is ample, then by the adjunction formula we have $(K_X+D)|_D=K_D+{\rm Diff}_D(0)$, which is anti-ample and klt. So by \cite[Theorem 1]{Zhang06} $D$ is rationally connected, in particular uniruled. However if we assume that $-(K_X+D)$ is big and semiample instead of ample, then the following example shows that $D$ is not necessarily uniruled. 
\begin{example}\label{ruledsurface}
Let $\pi: X=\mathbb{P}(\mathcal{E})\to C$ be a ruled surface where $C$ is an elliptic curve and $\mathcal{E}=\mathcal{O}_C\oplus\mathcal{L}$ such that $\mathcal{L}$ is a line bundle on $C$ and ${\rm deg}(\mathcal{L})<0$. Let $e=-{\rm deg}(\bigwedge^2\mathcal{E})$, then $e>0$ and $K_X\equiv_{\rm num} -2C_0-eF$ where $C_0$ is the unique section of $\pi$ with $\mathcal{O}_X(C_0)\cong \mathcal{O}_X(1)$ (see \cite[Ch. V, Example 2.11.3]{Hartshorne77}) and $F$ is a fiber. So we have
$$-(K_X+C_0)\equiv_{\rm num}C_0+eF=\epsilon C_0+(1-\epsilon)(C_0+\dfrac{e}{1-\epsilon}F)$$ 
where $\epsilon\in(0,1)$ is any rational number. Now $C_0+\dfrac{e}{1-\epsilon}F$ is ample by \cite[Ch. V, Proposition 2.20]{Hartshorne77} and $C_0$ is effective, so $-(K_X+C_0)$ is nef and big. Moreover by \cite[Theorem 1.7]{Gongyo12} we know that $-(K_X+C_0)$ is semiample. However $C_0$ is an elliptic curve, in particular not uniruled.
\end{example}
In this paper, first we give a criterion for uniruledness of $D$. Roughly speaking we show that if $X$ contains ``sufficiently many" rational curves, then as long as $K_X+D$ is not pseudo-effective the uniruledness of $D$ holds. More precisely we have
\begin{thmx}[Theorem \ref{hypersurfaceuniruledanydimension}]\label{1}
Let $(X,D)$ be a pair where $D=\sum_iE_i+\sum_ja_jF_j$ such that $E_i$ and $F_j$ are distinct prime divisors and $a_j\in (0,1)$. Suppose that ${\rm rd}(X)\ge 2$ and $K_X+D$ is not pseudo-effective, then $E_i$ is uniruled for any $i$.
\end{thmx}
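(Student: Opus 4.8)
The plan is to fix one component $E=E_i$ and to produce a covering family of rational curves on $E$ itself: I would use the non-pseudo-effectivity of $K_X+D$ to generate rational curves on $X$ that meet $E$ ``more negatively than transversally'', and then use ${\rm rd}(X)\ge 2$ to force these curves to move in a positive-dimensional family, so that a bend-and-break argument can trap an irreducible component inside $E$.

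First I would carry out the standard reductions. Since uniruledness is a birational invariant and passes down from a dominating family, I may take a log resolution and assume $X$ and $E$ are smooth, checking that ${\rm rd}(X)\ge 2$ and the non-pseudo-effectivity of $K_X+D$ survive the modification (the coefficients $a_j\in(0,1)$ stay in range and the $E_i$ remain the reduced part). Writing $D=E+D'$ with $D'\ge 0$, adjunction gives $(K_X+D)|_E=K_E+{\rm Diff}_E(D')$ with ${\rm Diff}_E(D')\ge 0$, so it suffices to produce a rational curve through a general point of $E$.

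Next I would generate rational curves on $X$. Since $K_X+D$ is not pseudo-effective, by Boucksom--Demailly--P\u{a}un--Peternell there is a movable (covering) curve class, represented by a general covering curve $C$, with $(K_X+D)\cdot C<0$; as $D\ge 0$ and $C$ is not contained in $D$, we get $K_X\cdot C\le (K_X+D)\cdot C<0$, so by Miyaoka--Mori $X$ is uniruled. Applying bend-and-break in the (log) Miyaoka--Mori framework to this family yields a covering family of rational curves $\{R_t\}$ with $(K_X+D)\cdot R_t<0$, hence the crucial strict inequality
\[
-K_X\cdot R_t \;>\; D\cdot R_t \;\ge\; E\cdot R_t\;\ge\;0 .
\]
Since the family covers $X\supseteq E$, through a general point $x\in E$ there passes such an $R_t$; if some $R_t\subset E$ we are done, so we may assume $R_t\not\subset E$ and $E\cdot R_t\ge 1$.

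The heart of the argument, and the step I expect to be the main obstacle, is converting these curves on $X$ into rational curves inside $E$. Here ${\rm rd}(X)\ge 2$ enters: through a general point $x$ the rational curves move in a positive-dimensional family (equivalently a $\ge 2$-dimensional rationally connected subvariety $W\ni x$ sweeps out strictly more than the curve itself), and this family covers $E$ as $x$ varies. The strict inequality $-K_X\cdot R>E\cdot R$ says the deformations of $R$ through $x$ carry more moduli than the intersection $R\cap E$ imposes, so I would run a bend-and-break degeneration of this positive-dimensional family with attachment conditions along $E$, forcing the limiting cycle to acquire an irreducible rational component contained in $E$. Equivalently, working inside $W$ and setting $E_W=E\cap W$, the inequality $-K_W\cdot R>E_W\cdot R$ contradicts the Miyaoka--Mori rigidity estimate for rational curves attached at $x$ to a hypothetical non-uniruled component of $E_W$; hence $E_W$ is uniruled, and letting $W$ vary shows $E$ is uniruled. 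Making this breaking argument yield an honest irreducible component in $E$ rather than a degenerate $1$-cycle of $X$, with the numerical inequality controlling it and surviving the reductions and adjunction, is the delicate point; it is also exactly where both hypotheses are indispensable, as Example \ref{ruledsurface} (where ${\rm rd}=1$) and the plane elliptic cubic (where $-K\cdot R=E\cdot R$) show.
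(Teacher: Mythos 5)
Your strategy (BDPP movable classes plus bend-and-break) is genuinely different from the paper's (dlt modification, then an MMP ending in a Mori fiber space, then induction on dimension), but as written it has two gaps, each fatal. The first is the claim that bend-and-break applied to a covering family of curves with $(K_X+D)\cdot C<0$ ``yields a covering family of rational curves $\{R_t\}$ with $(K_X+D)\cdot R_t<0$.'' This is not a consequence of Miyaoka--Mori: bend-and-break controls only the $K_X$-degree of the rational component retained through the prescribed general point, while the inequality $(K_X+D)\cdot(\,\cdot\,)<0$ survives only for the \emph{total} limit $1$-cycle, and the log-negativity may be carried entirely by components missing that point. Producing log-negative rational curves through a general point (``log bend and break'') is precisely the hard content of such statements, and it is exactly what the paper circumvents by running a $(K_X+D_A)$-MMP (via Theorem \ref{ExistMFS}), using Kawamata's theorem that divisors contracted in the MMP are uniruled, and then doing adjunction on the resulting Mori fiber space together with induction on dimension.

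The second gap is the step you yourself flag as the heart: there is no ``Miyaoka--Mori rigidity estimate'' that converts the inequality $-K_X\cdot R_t>E\cdot R_t$, plus positive-dimensionality of the family, into a rational component trapped inside $E$, and nothing you write uses ${\rm rd}(X)\ge 2$ in a checkable way. Example \ref{ruledsurface} shows how much weight this unproved step must bear: there the fibers $F$ of the ruled surface already form a covering family of rational curves with $(K_X+D)\cdot F=-1<0$ and $-K_X\cdot F=2>1=E\cdot F$, so every numerical conclusion of your first two steps holds, yet $E=C_0$ is an elliptic curve. (Note also that ${\rm rd}(X)\ge 2$ gives an abundance of rational curves through a general point, but not that your particular log-negative curves $R_t$ move in a positive-dimensional family with the point of attachment to $E$ fixed.) By contrast, in the paper's proof the hypothesis ${\rm rd}(X)\ge 2$ does concrete work in the case $\dim Y=n-1$ of the Mori fiber space $f:X\to Y$: the general fibers are $\mathbb{P}^1$, the $f$-ampleness of $-(K_X+D)$ forces $\deg(f|_{E_i})=1$, and ${\rm rd}(X)\ge 2$ guarantees that $Y$ is uniruled, whence so is $E_i$. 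To salvage your route you would need genuine proofs of both the log bend-and-break step and the trapping step; neither is available by the methods you invoke.
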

Here ${\rm rd}(X)$ is the rational dimension of $X$, which is the dimension of the general fiber of the maximal rationally connected fibration of $X$ (see Definition \ref{rd}). 

The author suspects that Theorem \ref{1} is already sharp. First note that we do not have any assumption on the singularities of the pair $(X, D)$ in Theorem \ref{1}. Next, Example \ref{ruledsurface} shows that the condition ${\rm rd}(X)\ge 2$ cannot be weakened even when $K_X+D$ is very negative (e.g. anti-big and anti-semiample). Finally, the following simple example indicates that the condition that $K_X+D$ is not pseudo-effective cannot be weakened either. 
\begin{example}
Let $C\subset \mathbb{P}^2$ be an elliptic curve of degree $3$. Then we have $K_{\mathbb{P}^2}+C\sim_{\rm lin}0$. Let $f:X\to \mathbb{P}^2$ be the blow-up of $X$ at a point not in $C$. We have
$$K_X+f_*^{-1}C=f^*(K_{\mathbb{P}^2}+C)+E,$$
where $E$ is the exceptional divisor. Now $X$ is a rational surface, in particular ${\rm rd}(X)=2$. $K_X+f_*^{-1}C$ is pseudo-effective and yet not nef. In this case $f_*^{-1}C$ is not a rational curve, hence not uniruled.
\end{example}

The strategy to prove Theorem \ref{1} is to use the minimal model program in arbitrary dimension developed in \cite{BCHM10} as well as an induction on the dimension of $X$. 

Note that \cite[Theorem 3.7]{LZ15} implies Theorem \ref{1} in the case where $(X,D)$ is dlt. This was pointed out by De-Qi Zhang after the completion of this paper.

Motivated by Theorem \ref{1} we also consider rational connectedness of hypersurfaces and obtain the following
\begin{thmx}[Theorem \ref{Dbig}, \ref{P3} and \ref{P4}] 
Let $(X,D)$ be a pair where $D = E+ \sum_j a_jF_j$ such that $\lfloor D\rfloor = E$ and $F_j$ are discinct prime divisors and $a_i \in [0, 1)$. Assume that $(X,D)$ is plt. Suppose that we are in one of the following cases.
\begin{enumerate}
\item $X$ is rationally connected, $\lfloor D\rfloor $ is big and $K_X+D$ is not pseudo-effective.
\item $X$ is a rationally connected threefold, $D$ is a prime divisor and $-(K_X+D)$ is Cartier, nef and big.
\item $X$ is a toric variety and $-(K_X+D)$ is big and semiample.
\end{enumerate}
Then $\lfloor D\rfloor$ is rationally connected.
\end{thmx}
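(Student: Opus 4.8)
\emph{The plan} is to handle the three cases by one principle: use adjunction to reduce the rational connectedness of $\lfloor D\rfloor$ to that of the reduced boundary of a genuinely positive (klt log Fano) pair, and then invoke \cite{Zhang06} and \cite{HM07}. The common difficulty is that the positivity of $-(K_X+D)$, or the bigness of $\lfloor D\rfloor$, may degenerate upon restriction to $E:=\lfloor D\rfloor$, so one cannot apply \cite{Zhang06} to $(E,{\rm Diff}_E(D-E))$ directly; each case supplies a different device to cure this. Throughout I may assume $\dim X\ge 2$. In every case $X$ is rationally connected (a toric variety being rational), so ${\rm rd}(X)=\dim X\ge 2$, and $K_X+D$ is not pseudo-effective (in cases (2) and (3) because $-(K_X+D)$ is big); hence Theorem \ref{1} already shows that $E$ is uniruled.

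For case (1), since $K_X+D$ is not pseudo-effective I would run a $(K_X+D)$-MMP via \cite{BCHM10}, terminating in a Mori fiber space $\phi\colon X'\to Z$ with $(X',D')$ still plt and $-(K_{X'}+D')$ $\phi$-ample. First, a big divisor is never contracted by a birational step of the MMP (an exceptional divisor cannot be big), so $E$ maps birationally to $E':=\lfloor D'\rfloor$ and it suffices to treat $E'$. Second, since $\rho(X'/Z)=1$, a vertical divisor is numerically a pullback from $Z$ and hence not big; thus $E'$ is horizontal and dominates $Z$. For a general fiber $G$ the pair $(G,D'|_G)$ is plt log Fano, so by adjunction $E'\cap G$ is klt log Fano, hence rationally connected by \cite{Zhang06}, and the Koll\'ar--Shokurov connectedness lemma shows $E'\cap G$ is connected. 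As $X'$ is rationally connected, so is its image $Z$; I would then conclude by the theorem of Graber--Harris--Starr applied to $E'\to Z$ (a fibration with rationally connected base and rationally connected general fiber has rationally connected total space).

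For case (2) I would argue cohomologically. As $-(K_X+D)$ is nef, big and Cartier and $X$ is klt, Kawamata--Viehweg vanishing gives $H^i(X,\mathcal{O}_X(-D))=H^i(X,\mathcal{O}_X(K_X-(K_X+D)))=0$ for $i>0$; feeding this into $0\to\mathcal{O}_X(-D)\to\mathcal{O}_X\to\mathcal{O}_D\to 0$ together with $H^i(X,\mathcal{O}_X)=0$ (valid since $X$ has rational singularities and is rationally connected) yields $h^1(\mathcal{O}_D)=h^2(\mathcal{O}_D)=0$. Since $D=E$ is uniruled, its minimal resolution has Kodaira dimension $-\infty$ and, being klt, vanishing irregularity; Castelnuovo's criterion then makes $D$ rational, hence rationally connected. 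For case (3) I would use that a big and semiample divisor defines its ample model $\psi\colon X\to Y$ with $-(K_X+D)=\psi^{\ast}H$, $H$ ample. Because $-(K_X+D)$ is a pullback, $\psi$ is crepant, i.e. $K_X+D=\psi^{\ast}(K_Y+D_Y)$, so $(Y,D_Y)$ is plt with $-(K_Y+D_Y)=H$ ample. A divisor of coefficient $1$ in $D$ has discrepancy $-1$, so if $E$ were $\psi$-exceptional it would be an exceptional place of discrepancy $-1$ over the plt pair $(Y,D_Y)$, which is impossible; hence $E\dashrightarrow\psi_{\ast}E=\lfloor D_Y\rfloor$ is birational, and adjunction on the plt log Fano pair $(Y,D_Y)$ with \cite{Zhang06} shows $\lfloor D_Y\rfloor$, and so $E$, is rationally connected. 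The toric hypothesis enters to guarantee that the ample model $Y$ and the induced pair are well behaved (e.g. $\mathbb{Q}$-factoriality and existence of the model).

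I expect \emph{the main obstacle} to lie in case (1), in the passage from the general fiber back to $E'$. The delicate points are the two positivity-preservation facts placing $E'$ favorably with respect to $\phi$ — that $E'$ remains big and uncontracted, and that a big divisor cannot be vertical — followed by the verification that $E'\to Z$ genuinely satisfies the hypotheses of Graber--Harris--Starr, for which the connectedness of the general fiber (via Koll\'ar--Shokurov connectedness on the plt log Fano fiber) is exactly what is required. In cases (2) and (3) the conceptual content is smaller, the one subtle step being the crepancy of the ample model in (3), which is precisely what forbids $E$ from being contracted.
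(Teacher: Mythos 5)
Your treatments of cases (1) and (2) are correct and essentially reproduce the paper's own arguments. For (1) the paper does exactly what you do: run the MMP to a Mori fiber space (after a small perturbation by an ample divisor to make the pair klt, so that \cite[Corollary 1.3.3]{BCHM10} applies --- a technical step you gloss over), note via Lemma \ref{notcontractedtocurve} that the big divisor $S=\lfloor D\rfloor$ is never contracted and must dominate the base, apply plt adjunction and \cite{Zhang06} on fibers together with Koll\'ar--Shokurov connectedness, and finish with \cite[Corollary 1.3]{GHS03}; the paper routes the fiber argument through a Stein factorization of $f|_S$ where you use connectedness of $E'\cap G$ directly, but this is the same proof. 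For (2) your vanishing-plus-Castelnuovo argument is, up to replacing the ideal-sheaf sequence by its Serre-dual canonical-sheaf sequence, identical to Lemmas \ref{ruledtorational} and \ref{irregularity0} combined with Theorem \ref{hypersurfaceuniruledanydimension}.

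Case (3), however, contains a genuine gap, and it sits exactly at the point your proof treats as obvious. Your argument runs: $K_X+D=\psi^*(K_Y+D_Y)$ is crepant, hence $(Y,D_Y)$ is plt, hence $E$ cannot be $\psi$-exceptional, since an exceptional place of discrepancy $-1$ over a plt pair is impossible. This is circular: crepancy of $\psi$ does \emph{not} give plt-ness of $(Y,D_Y)$, because the divisors on $X$ contracted by $\psi$ become exceptional over $Y$, and their discrepancies with respect to $(Y,D_Y)$ equal minus their coefficients in $D$. So if $E$ \emph{is} contracted, $(Y,D_Y)$ is merely lc, not plt, and no contradiction arises; plt-ness of $(Y,D_Y)$ is equivalent to, not a proof of, the non-contraction of $E$. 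That this failure really occurs is shown by the paper's own Example \ref{ruledsurface}: there $(X,C_0)$ is plt, $-(K_X+C_0)\equiv_{\rm num} C_0+eF$ is big and semiample, yet $(C_0+eF)\cdot C_0=0$, so $C_0$ lies in the null locus $\mathbf{B}_+(-(K_X+C_0))$ and is contracted by the ample model --- and indeed $C_0$ is an elliptic curve, not rationally connected. Hence the heart of case (3) is precisely to rule out $E\subseteq\mathbf{B}_+(-(K_X+D))$, and this is where the toric hypothesis must be used; it is not, as you suggest, a matter of ``well-behavedness'' or $\mathbb{Q}$-factoriality of the ample model. The paper does this via \cite[Lemma 15.1.8]{CLS11}: on a toric variety $\lfloor D\rfloor$ is $\mathbb{Q}$-linearly equivalent to a nonnegative combination of Cartier toric invariant divisors, whence $\lfloor D\rfloor\not\subseteq\mathbf{B}_+(-(K_X+D))$, and then applies Proposition \ref{P2}, which decomposes $-(K_X+D)\sim_{\mathbb{Q}}A+G$ with $A$ ample, $G$ effective and ${\rm Supp}(G)$ avoiding $\lfloor D\rfloor$, so that adjunction produces an anti-ample dlt (hence, after perturbation, klt) log canonical divisor on $\lfloor D\rfloor$ and \cite{Zhang06} applies. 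Your crepant-model framework can be salvaged, but only by inserting this toric input to show $E\not\subseteq{\rm Ex}(\psi)$; as written, the step ``which is impossible'' is false in the stated generality.
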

\subsection*{Acknowledgements} 
The author offers his special thanks to Honglu Fan who suggests this topic along with many interesting ideas, and Christopher Hacon who provides lots of valuable advices. He would also like to thank Mircea Musta\cb{t}\u{a} for helpful discussions. Moreover he thanks De-Qi Zhang for pointing out the close relationship between Theorem \ref{1} and \cite[Theorem 3.7]{LZ15}. Finally he is grateful to the referees for many useful suggestions and comments.
\section{preliminaries}
In this paper we work over the field of the complex number $\mathbb{C}$. We will freely use the standard notations in \cite[especially 3.G]{HK10} (e.g. pair, discrepancy and klt, plt, dlt, lc singularities). Terms such as uniruled, rationally connected (RC) and rationally chain connected (RCC) will also be used and their definitions can be found in \cite{Kollar96}. The following definition can be found in \cite{Harris} by Harris.
\begin{definition}\label{rd}
Let $X$ be a proper smooth variety and $f:X\dasharrow Z$ the maximal rationally connected fibration (cf. \cite[Definition 5.3]{Kollar96}). We define the \emph{rational dimension} of $X$ to be ${\rm rd}(X):={\rm dim}(X)-{\rm dim(Z)}$. If $X$ is singular we define the rational dimension of $X$ to be ${\rm rd}(\tilde{X})$ for some resolution $\mu:\tilde{X}\to X$ of $X$.
\end{definition}
Next we present two theorems that are essential in the proof of Theorem \ref{1}. We have the following definition of minimal dlt model.
\begin{definition} \cite[Definitions and Notation 1.9]{KK10}
Let $(X,D)$ be a pair and $f^m : X^m\to X$ a proper birational morphism such that 
$$K_{X^m}+(f^m)_*^{-1}D=(f^m)^*(K_X+D)+\sum_ia_iE_i.$$
Let $D^m:=(f^m)^{-1}D+\sum_{a_i\le -1}E_i$. Then $(X^m,D^m)$ is a \emph{minimal dlt model} of $(X,D)$ if it is a dlt pair and the discrepancy of every $f^m$-exceptional divisor is at most $-1$. 
\end{definition}
\begin{theorem}[Dlt modification, by Hacon]\cite[Theorem 3.1]{KK10}\label{dltmod}
Let $(X,D)$ be a pair such that $X$ is quasi-projective, $D$ is a boundary, and $K_X+D$ is a $\mathbb{Q}$-Cartier divisor. Then $(X, D)$ admits a $\mathbb{Q}$-factorial minimal dlt model: $(X^m,D^m)\to(X,D)$. In particular if $K_X+D$ is not pseudo-effective then $K_{X^m}+D^m$ is also not pseudo-effective.
\end{theorem}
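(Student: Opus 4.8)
The plan is to prove the existence of a $\mathbb{Q}$-factorial minimal dlt model by combining a log resolution with the relative minimal model program of \cite{BCHM10}. First I would take a log resolution $g : Y \to X$ of the pair $(X,D)$, so that $Y$ is smooth and the union of the strict transform of $D$ together with all $g$-exceptional divisors is a simple normal crossings divisor. Writing the crepant relation
$$K_Y + g_*^{-1}D = g^*(K_X+D) + \sum_i a_i E_i,$$
I would then set up a boundary on $Y$ designed to force the output to be dlt with all exceptional discrepancies at most $-1$. The natural choice is $\Delta := g_*^{-1}D + \sum_{i} E_i$, i.e. add \emph{every} $g$-exceptional divisor with coefficient $1$; then $(Y,\Delta)$ is log smooth, hence dlt, and $K_Y+\Delta = g^*(K_X+D) + \sum_i (a_i+1) E_i$.

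The key step is to run a $(K_Y+\Delta)$-MMP over $X$. Since $X$ is quasi-projective and $(Y,\Delta)$ is $\mathbb{Q}$-factorial dlt (in particular klt after perturbing, or handled directly by the dlt MMP of \cite{BCHM10}), such a relative MMP exists. Because every $g$-exceptional divisor enters $\Delta$ with coefficient $1$ and the non-exceptional part $g_*^{-1}D$ maps isomorphically onto $D$, the divisor $K_Y+\Delta$ is relatively of the form $\sum_i (a_i+1)E_i$ over $X$ — supported on the exceptional locus — and the MMP over $X$ will contract or flip exactly until the pushed-forward boundary $D^m = (f^m)_*^{-1}D + \sum_{a_i \le -1} E_i$ records precisely the exceptional divisors of discrepancy at most $-1$. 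The output $(X^m, D^m) \to (X,D)$ is then $\mathbb{Q}$-factorial, dlt, and every $f^m$-exceptional divisor has discrepancy at most $-1$, which is exactly the definition of a minimal dlt model. The main obstacle I anticipate is bookkeeping the termination and the coefficient matching: one must verify that the relative MMP terminates (which follows from \cite{BCHM10} since the log canonical divisor is relatively pseudo-effective or else contracts to a Mori fiber space that cannot occur over $X$ when $K_X+D$ is $\mathbb{Q}$-Cartier), and that the surviving exceptional divisors are precisely those with $a_i \le -1$.

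For the final assertion, suppose $K_X+D$ is not pseudo-effective. Since $f^m$ is birational, $(f^m)^*(K_X+D)$ is not pseudo-effective on $X^m$: pseudo-effectivity is a property preserved under pushforward by proper birational morphisms, so if $K_{X^m}+D^m = (f^m)^*(K_X+D) + \sum (\text{exceptional terms})$ were pseudo-effective, pushing forward would give $f^m_*(K_{X^m}+D^m) = K_X+D$ pseudo-effective, a contradiction. The only subtlety is that $D^m$ differs from $(f^m)_*^{-1}D$ by the exceptional divisors of discrepancy $\le -1$, but these are $f^m$-exceptional and hence vanish under pushforward, so $f^m_*(K_{X^m}+D^m)=K_X+D$ still holds. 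Therefore $K_{X^m}+D^m$ is not pseudo-effective, completing the proof.
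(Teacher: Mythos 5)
This statement is mostly imported: the existence of the $\mathbb{Q}$-factorial minimal dlt model is not proved in the paper at all but quoted from \cite[Theorem 3.1]{KK10}, and the only part the paper argues itself is the final ``in particular'' clause, justified in Remark \ref{dltpreserve}. For that clause your argument is correct and in substance the same as the paper's: the paper writes $K_{X^m}+D^m=(f^m)^*(K_X+D)+\sum_j b_jE_j$ with $b_j\le 0$ and concludes, while you push forward, using $(f^m)_*(K_{X^m}+D^m)=K_X+D$ and the fact that proper birational pushforward preserves pseudo-effectivity; your version is fine and does not even need the signs of the exceptional coefficients.

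Your reconstruction of the existence part follows the same strategy as the proof in \cite{KK10} (log resolution, boundary enlarged by every exceptional divisor with coefficient $1$, relative MMP over $X$), but as a proof it has two genuine gaps. First, termination: the reason you give --- the log canonical divisor is ``relatively pseudo-effective or else contracts to a Mori fiber space that cannot occur over $X$ when $K_X+D$ is $\mathbb{Q}$-Cartier'' --- is not a termination argument. Over a birational base every divisor is relatively big, so a Mori fiber space can never occur (this has nothing to do with $K_X+D$ being $\mathbb{Q}$-Cartier); the issue is not the dichotomy of outcomes but the termination of flips, and the termination results of \cite{BCHM10} (MMP with scaling, boundary big over the base) are stated for klt pairs, whereas $(Y,\Delta)$ is genuinely dlt. ``Klt after perturbing'' does not work naively, since shrinking the coefficient-$1$ exceptional components changes exactly the discrepancies the construction is designed to track; handling this point is the technical content of \cite[Theorem 3.1]{KK10}. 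Second, your claim that the surviving exceptional divisors are ``precisely'' those with $a_i\le -1$ is never justified (and the ``precisely'' is more than is needed): what the definition requires is that every surviving $f^m$-exceptional divisor have discrepancy at most $-1$, and this follows from the negativity lemma applied to the $f^m$-exceptional divisor $F^m$ with $K_{X^m}+D^m=(f^m)^*(K_X+D)+F^m$, which is nef over $X$ at the end of the MMP, forcing $-F^m\ge 0$. You state the conclusion but never invoke negativity. These gaps do not affect the paper, which simply cites the existence statement, but they mean your proposal only establishes the ``in particular'' clause.
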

\begin{remark} \label{dltpreserve}
The reason for the second statement in Theorem \ref{dltmod} is the following. We have that $f^m$ only extracts divisors with discrepancy $\le -1$. So by definition of $D^m$ we can write
$$K_{X^m}+D^m=f^*(K_X+D)+\sum_jb_jE_j$$
where $b_j\le 0$. Therefore the second statement holds. 
\end{remark}
The second theorem is the existence of Mori fiber space established by Birkar-Cascini-Hacon-McKernan. For convenience we give the definition of Mori fiber space here.
\begin{definition} \cite[Definition 3.10.7]{BCHM10} \label{DefMFS}
Let $(X, \Delta)$ be a log canonical pair and $f : X \to Z$ be a projective morphism of normal varieties. Then $f$ is a \emph{Mori fibre space} if
\begin{enumerate}
\item $X$ is $\mathbb{Q}$-factorial and $\Delta$ is an $\mathbb{R}$-divisor,
\item $f$ is a contraction morphism, $\rho(X/Z) = 1$ and $\dim Z < \dim X$, and 
\item $-(K_X + \Delta)$ is $f$-ample.
\end{enumerate}
\end{definition}
\begin{theorem}[Existence of Mori fiber space]\cite[Corollary 1.3.3]{BCHM10}\label{ExistMFS}
Let $(X,\Delta)$ be a $\mathbb{Q}$-factorial klt pair. Let $\pi:X\to U$ be a projective morphism of normal quasi-projective varieties. Suppose that $K_X+\Delta$ is not $\pi$-pseudo-effective. Then we can run $(K_X+\Delta)$-minimal model program over $U$ which ends with a Mori fiber space over $U$.
\end{theorem}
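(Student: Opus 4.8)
The plan is to realize the required run of the program as a relative minimal model program with scaling of an ample divisor, feeding in the existence of flips and the termination theorems of \cite{BCHM10}; the hypothesis that $K_X+\Delta$ is not $\pi$-pseudo-effective will be used only at the end, to force the output to be of fibre type rather than a minimal model. To set it up, since $(X,\Delta)$ is $\mathbb{Q}$-factorial klt and $\pi$ is projective, I would pick a general $\pi$-ample $\mathbb{R}$-divisor $A\ge 0$ such that $(X,\Delta+A)$ is still klt and $K_X+\Delta+A$ is nef over $U$; this is possible because a general member of a sufficiently positive $\pi$-ample system may be added to $\Delta$ without leaving the klt locus. With
$$\lambda_0=\inf\{\,t\ge 0 : K_X+\Delta+tA \text{ is nef over } U\,\}\le 1,$$
I would run the $(K_X+\Delta)$-MMP over $U$ with scaling of $A$, producing at each stage a $\mathbb{Q}$-factorial klt pair $(X_i,\Delta_i)$ over $U$ with strict transform $A_i$, a threshold $\lambda_i$ with $\lambda_{i+1}\le\lambda_i$, and, whenever $K_{X_i}+\Delta_i$ is not nef over $U$, a $(K_{X_i}+\Delta_i)$-negative extremal ray $R_i$ over $U$ with $(K_{X_i}+\Delta_i+\lambda_iA_i)\cdot R_i=0$.

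Each step of this program exists. The relative cone theorem for $\mathbb{Q}$-factorial klt pairs supplies $R_i$ and its contraction $X_i\to Z_i$ over $U$: if $\dim Z_i<\dim X_i$ the contraction is of fibre type and the program halts, if it is divisorial we replace $X_i$ by $Z_i$, and if it is small we replace $X_i$ by the flip $X_i\dasharrow X_{i+1}$, whose existence is the theorem on existence of klt flips in \cite{BCHM10}. All of these preserve $\mathbb{Q}$-factoriality and the klt property over $U$. Moreover each step preserves failure of $\pi$-pseudo-effectivity of the log canonical class: for a divisorial contraction $f$ one has $K_{X_i}+\Delta_i=f^*(K_{X_{i+1}}+\Delta_{i+1})+aE$ with $a>0$ and $E$ effective, while a flip is an isomorphism in codimension one, so in either case $\pi$-pseudo-effectivity of $K_{X_{i+1}}+\Delta_{i+1}$ would force that of $K_{X_i}+\Delta_i$. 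As $K_X+\Delta$ is not $\pi$-pseudo-effective, neither is any $K_{X_i}+\Delta_i$, and since a nef class is pseudo-effective, no $K_{X_i}+\Delta_i$ is nef over $U$. Hence the program can never halt at a minimal model.

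The main obstacle is termination, and this is exactly where the hard termination-with-scaling theorems of \cite{BCHM10} are indispensable. Suppose the program were infinite. The thresholds $\lambda_i$ are non-increasing in $[0,\infty)$, hence converge to some $\mu\ge 0$. If $\mu=0$, then the classes $K_{X_i}+\Delta_i+\lambda_iA_i$ are nef, in particular $\pi$-pseudo-effective, with $\lambda_i\to 0$; since pseudo-effectivity is stable under the steps of the program and the relative pseudo-effective cone is closed, this would exhibit $K_X+\Delta$ as $\pi$-pseudo-effective, a contradiction. Thus $\mu>0$. Then $(X,\Delta+\mu A)$ is klt (as $0\le\Delta+\mu A\le\Delta+A$) with boundary $\Delta+\mu A$ big over $U$ (because $\mu A$ is $\pi$-ample), and the tail of our program, on the steps with $\lambda_i>\mu$, is a minimal model program with scaling of $A$ for this big-boundary pair, with thresholds $\lambda_i-\mu\to 0^+$. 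Termination of such a program is precisely the deep input of \cite{BCHM10}, so the program cannot be infinite.

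Putting the two halves together, the program terminates but never reaches a minimal model; it must therefore terminate with a fibre-type contraction, that is, with a Mori fibre space over $U$ in the sense of Definition \ref{DefMFS}, as required. The genuine difficulty of the argument is concentrated entirely in the termination input from \cite{BCHM10}, which itself rests on the existence of minimal models for klt pairs with big boundary and on special termination; everything else is the formal bookkeeping of the scaling procedure together with the closedness of the pseudo-effective cone.
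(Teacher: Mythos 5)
The paper offers no proof of this statement at all: it is imported verbatim from \cite[Corollary 1.3.3]{BCHM10} and used as a black box, so there is no internal argument to compare yours against. What you have written is, in outline, the standard deduction of that corollary from the core machinery of \cite{BCHM10} (existence of klt flips and termination of the MMP with scaling for klt pairs with big boundary, their Corollary 1.4.2), and it is essentially correct; there is also no circularity in invoking those results, since this is exactly how the corollary is obtained in the source. Two points deserve to be made explicit. First, in the case $\mu>0$ you appeal to termination for the big-boundary pair $(X,\Delta+\mu A)$, but the termination-with-scaling theorem you need is stated for a $\pi$-pseudo-effective log canonical class with big boundary; you should therefore check that $K_X+\Delta+\mu A$ is $\pi$-pseudo-effective, which follows from the same limiting argument you already use when $\mu=0$: each $K_{X_i}+\Delta_i+\lambda_i A_i$ is nef over $U$, pseudo-effectivity pulls back through the $(K+\Delta+\lambda_i A)$-nonpositive steps, and closedness of the relative pseudo-effective cone lets $\lambda_i\downarrow\mu$. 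Second, your claim that the steps with $\lambda_i>\mu$ constitute an honest MMP with scaling of $A$ for $(X,\Delta+\mu A)$ is asserted rather than verified; it is true, because each step is $(K+\Delta+\lambda_i A)$-trivial and $(K+\Delta+tA)$-negative for $t<\lambda_i$, so the nef thresholds of the shifted pair are exactly $\lambda_i-\mu$, but a careful write-up should say so. With those two remarks supplied, your sketch is a faithful reconstruction of the intended proof, whereas the paper simply cites it.
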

Finally in this section we provide the following lemma which is known to experts.
\begin{lemma}\label{fiberklt}
Let $(X,D)$ be a klt pair. Suppose that we have a morphism $f:X\to Y$ such that ${\rm dim}(Y)<{\rm dim}(X)$ and $f_*\mathcal{O}_X=\mathcal{O}_Y$. Then for a general fiber $F$ of $f$, $(F,D|_F)$ is klt.
\end{lemma}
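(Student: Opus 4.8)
The plan is to reduce the statement to a computation on a log resolution and then invoke generic smoothness, which is available since we work over $\mathbb{C}$. First I would record the relevant normality statements. Since $f_*\mathcal{O}_X=\mathcal{O}_Y$ and $X$ is normal (being the underlying variety of a klt pair), $Y$ is normal as well, so its smooth locus is dense and a general point $y\in Y$ is a smooth point of $Y$. I claim that for general $y$ the fibre $F=f^{-1}(y)$ is again normal. A klt variety has rational, hence Cohen--Macaulay, singularities, so by generic flatness $f$ is flat with Cohen--Macaulay fibre over a neighbourhood of a general $y$, which gives condition $S_2$ on $F$. Applying generic smoothness to $X^{\mathrm{sm}}\to Y$, together with a dimension count for the image of the codimension $\ge 2$ locus $X\setminus X^{\mathrm{sm}}$, shows that $F$ is regular in codimension one. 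By Serre's criterion $F$ is normal. Finally, $K_X+D$ is $\mathbb{Q}$-Cartier and $F\not\subseteq {\rm Supp}(D)$, so $D|_F$ is a well-defined boundary, $K_F+D|_F=(K_X+D)|_F$ is $\mathbb{Q}$-Cartier, and $(F,D|_F)$ is an honest pair.

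Next I would fix a log resolution $g\colon \tilde{X}\to X$ of $(X,D)$ and write $K_{\tilde{X}}+\tilde{D}=g^*(K_X+D)$, where $\tilde{D}$ is supported on the union of the strict transform of $D$ and the $g$-exceptional divisors and has simple normal crossings support. The klt hypothesis is precisely the statement that every coefficient of $\tilde{D}$ is $<1$. Set $h=f\circ g$. Applying generic smoothness to $h$ and to each stratum of the snc divisor $\tilde{D}$, I obtain that for general $y$ the fibre $\tilde{F}=h^{-1}(y)$ is smooth, $\tilde{D}|_{\tilde{F}}$ is snc, and $g|_{\tilde{F}}\colon \tilde{F}\to F$ is a proper birational morphism, i.e. a log resolution of $(F,D|_F)$. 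Moreover the coefficients of $\tilde{D}|_{\tilde{F}}$ coincide with those of $\tilde{D}$, since $\tilde{F}$ meets each component of $\tilde{D}$ transversally.

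The conclusion then follows by adjunction. Restricting the identity $K_{\tilde{X}}+\tilde{D}=g^*(K_X+D)$ to $\tilde{F}$, and using adjunction for the smooth morphism $h$ over the smooth point $y$, namely $K_{\tilde{F}}=K_{\tilde{X}}|_{\tilde{F}}$ (the relative canonical $K_{\tilde{X}}-h^*K_Y$ restricts to $K_{\tilde{F}}$, while $h^*K_Y$ restricts trivially to the fibre), together with $(K_X+D)|_F=K_F+D|_F$ from the first step, I obtain $K_{\tilde{F}}+\tilde{D}|_{\tilde{F}}=(g|_{\tilde{F}})^*(K_F+D|_F)$. Since $g|_{\tilde{F}}$ is a log resolution and every coefficient of $\tilde{D}|_{\tilde{F}}$ is $<1$, this is exactly the statement that $(F,D|_F)$ is klt.

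The step I expect to require the most care is the simultaneous control of the geometry over a single general $y$: one must guarantee that one general fibre is smooth on the resolution, meets every snc stratum of $\tilde{D}$ transversally with unchanged coefficients, and descends to a normal fibre $F$ on which $K_F+D|_F$ equals $(K_X+D)|_F$. Each of these is a standard consequence of generic smoothness and generic flatness in characteristic zero, but assembling them so that the restricted identity holds with precisely the discrepancies of $(X,D)$ is the delicate part of the argument.
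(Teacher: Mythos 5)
Your proposal is correct and follows essentially the same route as the paper's own proof: pull back $K_X+D$ crepantly to a log resolution, restrict to a general fiber of the composite map, use generic smoothness in characteristic zero so that the restricted morphism is a log resolution of the fiber pair with unchanged coefficients $<1$, and conclude sub-kltness, hence kltness, of $(F,D|_F)$. The only difference is that you spell out details the paper leaves implicit (normality of the general fiber, the adjunction $K_{\tilde F}=K_{\tilde X}|_{\tilde F}$), while the paper instead splits the pulled-back divisor into its effective and anti-effective parts $\Gamma'-E'$; these are cosmetic variations of the same argument.
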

\begin{proof}
We take a log resolution for $(X,D)$ which we denote by $\mu:X'\to X$, and define $D'$ as 
$$K_{X'}+D'=\mu^*(K_X+D).$$
We write $D'=\Gamma'-E'$ where $\Gamma'$ and $E'$ are effective $\mathbb{Q}$-divisors which do not have common components. Let $f'=f\circ \mu$ and $F'$ a general fiber of $f'$ which maps to a general fiber of $f$ through $\mu$, then we have the following diagram
\begin{center}
\begin{tikzpicture}[scale=1.6]
\node (A) at (0,0) {$X'$};
\node (B) at (1,0) {$X$};
\node (C) at (0,1) {$F'$};
\node (D) at (1,1) {$F$};  
\path[->,font=\scriptsize]
(A) edge node[above]{$\mu$} (B)
(C) edge node[above]{$\nu$} (D);
\draw[->,font=\scriptsize]
(C) edge node[right]{} (A)
(D) edge node[right]{} (B);
\end{tikzpicture} 
\end{center}    
Since $\Gamma'|_{F'}$ is simple normal crossing we have that
$$(K_{X'}+\Gamma')|_{F'}=K_{F'}+\Gamma'|_{F'}$$
is klt. So
$$\nu^*(K_F+D|_F)=\mu^*(K_X+D)|_{F'}=(K_{X'}+D')|_{F'}=K_{F'}+\Gamma'-E'$$
is sub-klt. Therefore $K_F+D|_F$ is klt.
\end{proof}
\section{Uniruledness of hypersurfaces}
The main theorem of this section is as follows.
\begin{theorem}\label{hypersurfaceuniruledanydimension}
Let $(X,D)$ be a pair where $D=\sum_iE_i+\sum_ja_jF_j$ such that $E_i$ and $F_j$ are distinct prime divisors and $a_j\in (0,1)$. Suppose that ${\rm rd}(X)\ge 2$ and $K_X+D$ is not pseudo-effective, then $E_i$ is uniruled for any $i$.
\end{theorem}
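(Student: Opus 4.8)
The plan is to push $(X,D)$ through the minimal model program until it becomes a Mori fibre space, and then to produce rational curves on $E_i$ fibre by fibre, invoking the hypothesis $\mathrm{rd}(X)\ge 2$ only at the very last step. Fix one component $E:=E_i$ of $\lfloor D\rfloor$. Since uniruledness is a birational invariant and descends along dominant maps, and since a dlt modification (Theorem \ref{dltmod}) replaces $(X,D)$ by a $\mathbb{Q}$-factorial pair whose log canonical class is still not pseudo-effective and whose strict transform of $E$ is still a coefficient-one component, I may assume $X$ is $\mathbb{Q}$-factorial. Reducing the coefficients of the integral part slightly gives a klt pair $(X,\Delta_\epsilon)$ with $\Delta_\epsilon=D-\epsilon\sum_iE_i$, and $K_X+\Delta_\epsilon$ is still not pseudo-effective (non-pseudo-effectivity is an open condition). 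By Theorem \ref{ExistMFS} I then run a $(K_X+\Delta_\epsilon)$-MMP ending in a Mori fibre space $g\colon X'\to Z$ on which $-(K_{X'}+\Delta'_\epsilon)$ is $g$-ample; writing $E'$ for the strict transform of $E$, it suffices to show $E'$ is uniruled.

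First I dispose of the degenerate behaviours. If $E$ is contracted during the program this can only occur at a divisorial contraction, whose exceptional divisor is swept out by the contracted $(K+\Delta_\epsilon)$-negative extremal rational curves and is therefore uniruled; as $E$ is birational to it, $E$ is uniruled. So assume $E'$ survives as a divisor on $X'$. By Lemma \ref{fiberklt} a general fibre $F$ of $g$ is klt, and $-(K_F+\Delta'_\epsilon|_F)$ is ample, so $(F,\Delta'_\epsilon|_F)$ is log Fano and hence rationally connected by \cite{Zhang06,HM07}. If $E'$ is vertical, i.e. $g(E')\subsetneq Z$, then $E'$ is dominated by the rationally connected fibres lying over $g(E')$ and is uniruled.

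The substantial case is $E'$ horizontal, so that $E'\to Z$ is dominant with general fibre $E'\cap F$. When $\dim F\ge 2$ I reduce the dimension: $F$ is rationally connected of dimension $\ge 2$, so $\mathrm{rd}(F)=\dim F\ge 2$, and setting $D_F:=\Delta'_\epsilon|_F+\epsilon(E'\cap F)$ produces a boundary on $F$ whose integral part is $E'\cap F$ and with $K_F+D_F=-(A-\epsilon(E'\cap F))$, where $A:=-(K_F+\Delta'_\epsilon|_F)$ is ample; since $\rho(X'/Z)=1$ the divisor $E'\cap F$ is ample, so for small $\epsilon$ the class $K_F+D_F$ is anti-ample and in particular not pseudo-effective. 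As $\dim F<\dim X$, the induction hypothesis applies and shows $E'\cap F$ is uniruled; being fibred over $Z$ with these uniruled general fibres, $E'$ is uniruled.

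The remaining and hardest case is $E'$ horizontal over a fibration by curves, $\dim F=1$, where the fibrewise reduction collapses and the induction is unavailable; this is exactly the configuration of Example \ref{ruledsurface}, and it is here that $\mathrm{rd}(X)\ge 2$ must enter. On one side, a general fibre is $F\cong\mathbb{P}^1$, and keeping the coefficient of $E$ equal to one gives, from $0<-(K_{X'}+D')\cdot F=2-D'\cdot F$, the estimate $\deg(E'/Z)\le D'\cdot F<2$; hence $E'\to Z$ has degree one and $E'$ is a birational section of $g$, so $E'$ is birational to $Z$. On the other side, $\mathrm{rd}(X')=\mathrm{rd}(X)\ge 2$ forces $Z$ to be uniruled: a general point of $X'$ lies on a rationally connected subvariety of dimension $\ge 2$, whose image in $Z$ is rationally connected of dimension $\ge 1$ and sweeps out a general point of $Z$. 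Therefore $E'$, being birational to the uniruled base $Z$, is uniruled. The obstacle I anticipate is technical rather than conceptual: the degree estimate requires the coefficient of $E$ to be exactly one throughout the program, whereas Theorem \ref{ExistMFS} is stated for klt pairs; reconciling this—by running the program for the dlt pair directly, or by a plt modification isolating $E$, and verifying that $-(K_{X'}+D')$ stays relatively ample with $E'$ at coefficient one—is the delicate point on which the curve-fibration case turns.
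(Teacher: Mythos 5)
Your skeleton is the same as the paper's (dlt modification, perturb to a klt pair, run the MMP of Theorem \ref{ExistMFS} to a Mori fibre space, split into cases by the dimension of the base, induct on dimension, and use ${\rm rd}(X)\ge 2$ only in the curve-fibration case), but your perturbation goes in the wrong direction, and this produces genuine gaps rather than technicalities. You subtract $\epsilon\sum_i E_i$ to achieve klt-ness; the paper instead \emph{adds} a general effective ample divisor $A$ with no component in ${\rm Supp}(D)$ such that $D+A\sim_{\mathbb{Q}}D_A$ with $(X,D_A)$ klt, and runs the $(K_X+D_A)$-MMP. Since $D_A\sim_{\mathbb{Q}}D+A$, that MMP is simultaneously a $(K_X+D+A)$-MMP, so on the resulting Mori fibre space $-(K_{X'}+D')|_F=-(K_{X'}+D'+A')|_F+A'|_F$ is ample plus effective on a general fibre $F$: the \emph{full-coefficient} divisor $K+D$ keeps its negativity along fibres, with $E$ still at coefficient one. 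With your downward perturbation you only control $K+\Delta_\epsilon$ at the end, and both of your horizontal cases break. (i) In the case $\dim F\ge 2$, your claim that $K_F+D_F=-A+\epsilon(E'\cap F)$ is anti-ample ``for small $\epsilon$'' is circular: $\epsilon$ must be fixed \emph{before} the MMP is run, while $A$ and $E'\cap F$ are outputs of that MMP, and nothing bounds $\epsilon(E'\cap F)$ against $A$ uniformly in $\epsilon$; different $\epsilon$ give genuinely different MMPs, so no limiting argument is available. (ii) In the case $\dim F=1$, which you flag yourself, your estimate only gives $(1-\epsilon)\deg(E'/Z)\le \Delta'_\epsilon\cdot F<2$, hence $\deg(E'/Z)\le 2$, and a degree-two cover of a uniruled base need not be uniruled. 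The ``reconciliation'' you defer at the end is exactly the $D+A\sim_{\mathbb{Q}}D_A$ trick above; it is not a routine detail but the technical heart of the proof, and without it neither (i) nor (ii) closes.

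Separately, your case analysis is incomplete: you never treat $\dim Z=0$, which Theorem \ref{ExistMFS} allows. When the Mori fibre space maps to a point, $F=X'$ and $E'\cap F=E'$, so ``$\dim F<\dim X$'' fails and your induction does not reduce dimension; you also never set up a base case for the induction at all. The paper handles this by a different mechanism, both in the surface base case (Lemma \ref{RCCsurface}) and in Case 1 of the proof of Theorem \ref{hypersurfaceuniruledanydimension}: adjunction $(K_{X'}+D')|_{E'}=K_{E'}+{\rm Diff}_{E'}(D'-E')$ with ${\rm Diff}\ge 0$ shows $-K_{E'}$ is big (in dimension one, of negative degree), and then a $K_{E'}$-MMP ending in a Mori fibre space (resp.\ rationality of the curve) gives uniruledness of $E'$ directly. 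Without this case and without a base case, the inductive argument does not get off the ground even in dimension $2$.
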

By Theorem \ref{dltmod} and Remark \ref{dltpreserve} we can assume that $(X,D)$ dlt and $\mathbb{Q}$-factorial by possibly doing a dlt modification. We first consider the case when ${\rm dim}(X)=2$. Note that in this case ${\rm rd}(X)\ge 2$ is equivalent to that $X$ is RC.
\begin{lemma}\label{RCCsurface}
Let $(X,D)$ be dlt pair where ${\rm dim}(X)=2$. Suppose that $X$ is RC and $K_X+D$ is not pseudo-effective, then every component of $D$ with coefficient $1$ is a rational curve (in particular uniruled).
\end{lemma}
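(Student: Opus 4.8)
The plan is to reduce to a Mori fiber space and then analyze the structure of the fibration directly, using the low dimension to classify what can happen. First I would record that $(X,D)$ is dlt, hence klt away from the components of coefficient one, and that $X$ is a rational (since RC plus $\dim X = 2$ forces $X$ rational) normal projective surface. Because $K_X+D$ is not pseudo-effective, I cannot yet run an MMP on $K_X+D$ itself with guaranteed termination unless the pair is klt; so the first maneuver is to pick one component $E=E_{i_0}$ of coefficient one whose uniruledness I want to establish, and to perturb the boundary. Concretely, I would replace $D$ by a slightly smaller boundary $D' = (1-\epsilon)E + (\text{the rest})$ so that $(X,D')$ is klt and $K_X+D'$ is still not pseudo-effective (possible since non-pseudo-effectivity is an open condition and lowering coefficients only decreases the divisor). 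Now Theorem \ref{ExistMFS} applies: I run the $(K_X+D')$-MMP over $\mathrm{Spec}\,\mathbb{C}$, obtaining a Mori fiber space $g:X''\to Z$ with $\dim Z < 2$.

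Next I would track the image of $E$ under the MMP. There are two cases for the base $Z$. If $\dim Z = 0$, then $X''$ is a klt del Pezzo surface (since $-(K_{X''}+D'')$ is ample over a point), and in particular $X''$ is a Fano surface of Picard number one; such a surface is covered by rational curves, and I would argue that the strict transform $E''$ of $E$ is itself a rational curve — using that on a log del Pezzo surface every curve of negative intersection with $K+D''$ on the boundary has genus controlled by adjunction. If $\dim Z = 1$, then $g:X''\to Z$ is a Mori fiber space with one-dimensional fibers, i.e. a $\mathbb{P}^1$-fibration over a curve $Z$; since $X''$ is rational, $Z\cong\mathbb{P}^1$. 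Here the key observation is that $E''$ either dominates $Z$ or is contained in a fiber. A fiber is a rational curve, so the second subcase is immediate. In the dominating subcase I would use adjunction $(K_{X''}+E''+\cdots)|_{E''} = K_{E''}+\mathrm{Diff}$ together with the $g$-ampleness of $-(K_{X''}+D'')$ to bound the arithmetic genus of $E''$ and force it to be rational.

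The main obstacle, which I would treat carefully, is the behavior of the chosen component $E$ along the steps of the MMP: a priori $E$ could be contracted to a point by one of the divisorial contractions or flips, in which case the argument on $X''$ says nothing about $E$ itself. To handle this I would invoke the standard fact that if a birational contraction $\phi:X\dashrightarrow X''$ contracts the divisor $E$, then $E$ is covered by curves contracted by $\phi$, and such contracted divisors in the MMP for a non-pseudo-effective pair are necessarily uniruled — indeed a divisor contracted to a point by a projective birational morphism of surfaces is a tree of rational curves, so $E$ is rational in that event as well. Thus in every branch $E$ turns out to be rational, and since $E=E_{i_0}$ was an arbitrary coefficient-one component, the lemma follows. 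The delicate bookkeeping is ensuring that after perturbing to $D'$ the divisor $E$ still appears (now with coefficient $1-\epsilon<1$) in the boundary so that adjunction on $X''$ can be applied to its strict transform, and verifying that the genus bound from adjunction is strict enough to conclude rationality rather than merely uniruledness in the one-dimensional-base case.
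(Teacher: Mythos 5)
Your overall skeleton --- run an MMP to a Mori fiber space, dispose of any contracted component by the fact that contracted divisors are (trees of) rational curves, then argue on the fiber space by adjunction, splitting on $\dim Z$ --- is the same as the paper's. But your first move, perturbing $D$ to a klt boundary $D'=(1-\epsilon)E+\cdots$ so that the klt MMP (Theorem \ref{ExistMFS}) applies, creates a gap that is not the ``delicate bookkeeping'' you flag at the end: it destroys exactly the coefficient-one adjunction that forces rationality, and the statements you then need at the Mori fiber space level are false as statements about such fiber spaces. Over a point: $(\mathbb{P}^2,(1-\epsilon)C)$ with $C$ a smooth cubic is a klt pair with $\rho=1$ and $-(K_{\mathbb{P}^2}+(1-\epsilon)C)\sim_{\mathbb{Q}}3\epsilon H$ ample, yet $C$ is elliptic; with coefficient $1-\epsilon$ adjunction only gives $2p_a(E'')-2<\epsilon (E'')^2$, and you have no control over $(E'')^2$ because the model $X''$ itself depends on $\epsilon$. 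Over a curve: $g$-ampleness of $-(K_{X''}+D'')$ only gives $(1-\epsilon)E''\cdot F<2$, i.e.\ $E''\cdot F\le 2$, and a degree-two cover of $\mathbb{P}^1$ need not be rational --- e.g.\ $\mathbb{P}^1\times\mathbb{P}^1\to\mathbb{P}^1$ with boundary $(1-\epsilon)E''$, where $E''$ is a smooth $(2,2)$-curve (elliptic), is a klt Mori fiber space in which your dominating subcase fails. Of course neither example can arise from the hypotheses of the lemma (in both, the unperturbed $K+E''$ is trivial, hence pseudo-effective), but your argument never again uses non-pseudo-effectivity of the \emph{unperturbed} $K_X+D$ after the perturbation, and it is not automatically preserved: the steps of the $(K_X+D')$-MMP need not be $(K_X+D)$-negative, so nothing in your proof excludes these configurations.

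The paper avoids this entirely: in dimension two the MMP runs for dlt (indeed log canonical) pairs with no perturbation, by \cite[Theorem 1.1]{Fujino12} --- precisely the termination statement you assumed was unavailable outside the klt category. Keeping $E$ at coefficient one, adjunction gives $(K_{X'}+D')|_{D'_i}=K_{D'_i}+{\rm Diff}_{D'_i}(D'-D'_i)$ with ${\rm Diff}_{D'_i}(D'-D'_i)\ge 0$, so over a point $\deg K_{D'_i}<0$, and over a curve $0>(K_{X'}+D'_i)\cdot F=-2+D'_i\cdot F$ forces fiber degree exactly one; both give rationality, not merely uniruledness. If you insist on perturbing, you would have to choose $\epsilon$ adaptively so that every step of the $(K_X+D')$-MMP is also a $(K_X+D)$-non-positive step (possible in principle, since each step involves finitely many extremal rays and the program terminates) --- but carrying that out is essentially a proof of the dlt surface MMP, i.e.\ of the theorem of Fujino the paper quotes. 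A separate small error: to make $(X,D')$ klt you must lower the coefficients of \emph{all} coefficient-one components of $D$, not only of the chosen $E$.
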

\begin{proof}
We run a $(K_X+D)$-minimal model program. Since $K_X+D$ is not pseudo-effective, by \cite[Theorem 1.1]{Fujino12} the minimal model program ends with a Mori fiber space which we denote by $g:X'\to Y$. Since $X'$ is an RC surface, $Y$ is either a point or a rational curve. If any component of $f_*^{-1}D$ is contracted during the minimal model program then by \cite[Theorem 2]{Kawamata91} that component must be a rational curve. We denote the strict transform of $f_*^{-1}D$ on $X'$ by $D'$ and denote by $D'_1,...,D'_m$ the irreducible components of $D'$ with coefficient $1$. 

If $Y$ is a point then $K_{X'}+D'$ is anti-ample. By the adjunction formula (cf. \cite[Proposition 3.9.2]{Corti07} we have 
$$(K_{X'}+D')|_{D'_i}=(K_{X'}+D'_i)|_{D'_i}+(D'-D'_i)|_{D'_i}=K_{D'_i}+{\rm Diff}_{D'_i}(D'-D'_i)$$
and ${\rm Diff}_{D'_i}(D'-D'_i)\ge 0$ by \cite[16.5]{Kollar92}. So $K_{D'_i}$ has negative degree, hence $D'_i$ is a rational curve. So for the rest of the proof we assume that $Y$ is a rational curve. For any $i$ if $D'$ does not dominate $Y$ then it is a component of a fiber of $g$, which is a rational curve by \cite[Lemma 3.7]{Debarre01}. If $D'_i$ dominates $Y$, then ${\rm deg}(g|_{D'_i})=>{\rm deg}(D'_i|_{F})>0$. Moreover we have
\begin{align*}
0> & {\rm deg}((K_{X'}+D')|_{F})>{\rm deg}((K_{X'}+D'_i)|_{F}) = {\rm deg}(K_{X'}|_{F})+{\rm deg}((D'_i)|_{F}) \\
= & {\rm deg}(K_{F})+{\rm deg}((D'_i)|_{F})=-2+{\rm deg}((D'_i)|_{F}),
\end{align*}
where the first inequality is by the fact that $-(K_{X'}+D')$ is $g$-ample, and the last equality is by the fact that $F\cong\mathbb{P}^1$. So we get that ${\rm deg}(g|_{D'_i})={\rm deg}((D'_i)|_{F})=1$, and since $Y$ is rational we know that $D'_i$ is rational. Hence every component of $D$ with coefficient $1$ is rational.
\end{proof}
\begin{proof}[Proof of Theorem \ref{hypersurfaceuniruledanydimension}]
By the argument before Lemma \ref{RCCsurface} it suffices to prove the theorem under the hypothesis that $(X,D)$ is dlt. We prove the theorem by induction on the dimension of $X$. When ${\rm dim}(X)=2$ this is proven in Lemma \ref{RCCsurface}. Suppose that the statement holds in any dimension  $k\in [2,n-1]$. Then in dimension $n$ we first run a minimal model program with scaling for  $(K_X+D)$. Since $K_X+D$ is not pseudo-effective there is an effective ample $\mathbb{Q}$-divisor $A$ such that 
\begin{itemize}
\item No component of $A$ is contained in ${\rm Supp}(D)$.
\item $K_X+D+A$ is still dlt and not pseudo-effective.
\item There exists a $\mathbb{Q}$-divisor $D_A$ such that $D+A\sim_{\mathbb{Q}}D_A$ and $(X,D_A)$ is klt. 
\end{itemize}
We run a $(K_X+D_A)$-MMP and by Theorem \ref{ExistMFS} it ends with a Mori fiber space as follows:
\begin{align}\label{MMP}
X=X_0 \overset{\text{$f_0$}}\dashrightarrow X_1\overset{\text{$f_1$}}\dashrightarrow ...\overset{\text{$f_{N-1}$}}\dashrightarrow X_N=X'\xrightarrow{g}Y.
\end{align}
Denote the strict transform of $D$, $A$ and $E_i$ on $X_k$ by $D^k$, $A^k$ and $E_i^k$ respectively. If for a certain $i$ and $k$, $E_i^k$ is contracted by $f_k$, then by \cite[Theorem 2]{Kawamata91} we know that $E_i^k$ is uniruled. By assumption on $(X,D+A)$ and \cite[Lemma 3.38]{KM98} we know that for any $k$, $(X_k,D^k+A^k)$, hence $(X_k, D^k)$, is dlt. Moreover it is easy to see that ${\rm rd}(X_i)\ge 2$ for any $i$. So we can assume that there is a morphism $f:X\to Y$ which is a Mori fiber space. By condition (3) in Definition \ref{DefMFS}, Lemma \ref{fiberklt} and \cite[Theorem 1]{Zhang06} we have that a general fiber of $f$ is RC (note that in this step we can actually work with $(X,
D_A)$ in this step, which is klt instead of dlt). Now we consider the following three cases respectively. \\

\noindent\emph{Case 1.} If ${\rm dim}(Y)=0$ then $-(K_X+D)$ is ample. So for any $E_i$ by the adjunction formula we have
$$(K_X+D)|_{E_i}=K_{E_i}+{\rm Diff}_{E_i}(D-E_i).$$
Hence $K_{E_i}+{\rm Diff}_{E_i}(D-E_i)$ is anti-ample and dlt, in particular $-K_{E_i}$ is big. Now if we do a $K_{E_i}$-minimal model program it would end with a Mori fiber space, in particular $E_i$ is uniruled.\\

\noindent\emph{Case 2.} If $1\le{\rm dim}(Y)\le n-2$, then for any $i$ we can assume that $E_i$ dominates $Y$. Indeed if this is not the case then for dimensional reasons $E_i$ is covered by fibers of $f$, and by the fact that the general fibers of $f$ are RC and \cite[Lemma 3.7]{Debarre01} we know that every fiber of $f$ is covered by rational curves. So we are done. Now for a general fiber $F$ of $f$ we have that $F$ is RC and $2\le{\rm dim}(F)\le n-1$. Suppose that $E_i|_F=\sum_lE_{F,i}^l$ where $E_{F,i}^l$ are the irreducible components of $E_i|_F$. By the adjunction formula we know that
$$(K_X+D)|_F=K_F+D|_F=K_F+E_{F,i}^l+(D|_F-E_{F,i}^l)$$ 
is anti-ample, so $-(K_F+E_{F,i}^l)$ is big for any $i$. After possibly doing a dlt modification for $(F,E_{F,i}^l)$ we can also assume that $(F,E_{F,i}^l)$ is dlt. By induction hypothesis we know that $E_{F,i}^l$ is uniruled for any $i$. Therefore $E_i$ is uniruled.\\

\noindent\emph{Case 3.} If ${\rm dim}(Y)=n-1$, then for the same reason as in \emph{Case 2} we can assume that $E_i$ dominates $Y$ for any $i$. After shrinking $X$ to its nonsingular locus, by generic smoothness the general fibers of $f$ are isomorphic to $\mathbb{P}^1$. Since ${\rm rd}(X)\ge 2$ we know that $Y$ is uniruled. So we only need to show that $f|_{E_i}$ has degree $1$. If ${\rm deg}(f|_{E_i})\ge 2$ then for a general fiber $F$ of $f$ we have 
$${\rm deg}((K_X+D)|_F)\ge{\rm deg}(K_F+E_i|_F)={\rm deg}(K_F)+{\rm deg}(E_i|_F)\ge -2+2=0,$$
in particular $-(K_X+D)$ cannot be $f$-ample. This is a contradiction, so we are done.
\end{proof}
\section{Rational connectedness of hypersurfaces}
Of course we can also ask whether certain positivity of $-(K_X+D)$ implies rational connectedness of components of $D$. This seems more complicated than uniruledness. We first point out that we cannot get RC-ness of components of $D$ by simply letting $X$ be RC in Theorem \ref{hypersurfaceuniruledanydimension}, even for log-smooth pairs in dimension $3$.
\begin{example}\label{P1bundleoverP2}
Let $g:X=\mathbb{P}(\mathcal{E})\to\mathbb{P}^2$ be the $\mathbb{P}^1$-bundle over $\mathbb{P}^2$ where $\mathcal{E}=\mathcal{O}_{\mathbb{P}^2}\oplus\mathcal{O}_{\mathbb{P}^2}(d)$ and $d\le -1$. Then $\omega_{X/\mathbb{P}^2}=g^*(\bigwedge^2\mathcal{E})\otimes\mathcal{O}_{\mathbb{P}(\mathcal{E})}(-2)$. Hence $K_X\sim_{\rm lin}(d-3)g^*H-2h$ where $H$ is a hyperplane in $\mathbb{P}^2$ and $h$ is the divisor class in $\mathbb{P}(\mathcal{E})$ induced by $\mathcal{O}_{\mathbb{P}(\mathcal{E})}(1)$. We take a general hypersurface $S\sim_{\rm lin}3H$ in $\mathbb{P}^2$ which is an elliptic curve. Let $D:=g^{-1}(S)$, then 
$$-K_X-D\sim_{\rm lin}-dg^*H+2h$$
which is big but obviously $g^{-1}(S)$ is not RC as $S$ is not rational.
\end{example}
However if we assume bigness of $\lfloor D\rfloor$ then we have the following result.
\begin{theorem}\label{Dbig}
Let $(X,D)$ be a plt pair. Suppose that $X$ is RC, $\lfloor D\rfloor$ is big and $K_X+D$ is not pseudo-effective. Then $\lfloor D\rfloor$ is RC.
\end{theorem}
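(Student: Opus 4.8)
The plan is to combine the uniruledness already furnished by Theorem \ref{hypersurfaceuniruledanydimension} with a Mori fibre space reduction, and then to use the bigness of $\lfloor D\rfloor$ precisely to force $\lfloor D\rfloor$ to dominate the base of that fibration. First I would reduce to the case where $X$ is $\mathbb{Q}$-factorial by passing to a small $\mathbb{Q}$-factorialization, which preserves plt-ness, the class $\lfloor D\rfloor=E$ together with its bigness, the rational connectedness of $X$, and the non-pseudoeffectivity of $K_X+D$. Since $X$ is rationally connected we have ${\rm rd}(X)=\dim X\ge 2$, so Theorem \ref{hypersurfaceuniruledanydimension} already gives that $E$ is uniruled; the content is to upgrade this to rational connectedness. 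As in the proof of Theorem \ref{hypersurfaceuniruledanydimension}, I would run a $(K_X+D)$-minimal model program (via a small klt perturbation $D_A\sim_{\mathbb{Q}}D+A$ and Theorem \ref{ExistMFS}) and arrive at a Mori fibre space $g:X'\to Y$. A key preliminary observation is that $E$ can never be contracted in this program: a big divisor has maximal Iitaka dimension and therefore cannot be exceptional for a birational contraction. Hence the strict transform $E'=\lfloor D'\rfloor$ is a prime divisor which is still big, $(X',D')$ is still plt, and $-(K_{X'}+D')$ is $g$-ample.

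If $\dim Y=0$ then $-(K_{X'}+D')$ is ample, and plt adjunction (cf. \cite[16.5]{Kollar92}) gives $(K_{X'}+D')|_{E'}=K_{E'}+{\rm Diff}_{E'}(D'-E')$, a klt pair with $-(K_{E'}+{\rm Diff}_{E'}(D'-E'))$ ample; by \cite[Theorem 1]{Zhang06} it follows that $E'$ is rationally connected. If $\dim Y\ge 1$, this is exactly where bigness enters: since $E'$ is big it cannot be vertical for $g$ (a vertical divisor is disjoint from a general fibre, so its Iitaka dimension drops below $\dim X'$), hence $E'$ dominates $Y$. This is the structural fact that rules out Example \ref{P1bundleoverP2}, in which $\lfloor D\rfloor$ is vertical and fails to be rationally connected. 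For a general fibre $F$ of $g$, the pair $(F,D'|_F)$ is plt with $\lfloor D'|_F\rfloor=E'\cap F$ (the analogue of Lemma \ref{fiberklt} for the plt boundary) and $-(K_F+D'|_F)=-(K_{X'}+D')|_F$ is ample. The Koll\'ar--Shokurov connectedness theorem \cite[17.4]{Kollar92} then shows $E'\cap F$ is connected, and since $(F,D'|_F)$ is plt its round-down has disjoint normal components, so $E'\cap F$ is irreducible; applying plt adjunction and \cite[Theorem 1]{Zhang06} inside $F$ shows this general fibre of $g|_{E'}:E'\to Y$ is rationally connected. Finally $Y$, being the image of the rationally connected variety $X'$, is rationally connected, so by the Graber--Harris--Starr theorem (cf. \cite[Chapter 4]{Debarre01}) the total space $E'$ of the fibration $g|_{E'}$ is rationally connected as well.

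In either case $E'$ is rationally connected, and since $E\dashrightarrow E'$ is birational (as $E$ is not contracted) and rational connectedness is a birational invariant, $E=\lfloor D\rfloor$ is rationally connected. I expect the main obstacle to be the bookkeeping in the minimal model program: one must run the program so that the plt pair $(X',D')$, the integrality of the coefficient of $E'$, and the relative ampleness of $-(K_{X'}+D')$ are all controlled simultaneously, despite passing through the auxiliary klt perturbation used to invoke Theorem \ref{ExistMFS}. The genuinely new input beyond the proof of Theorem \ref{hypersurfaceuniruledanydimension} is conceptual rather than technical: bigness of $\lfloor D\rfloor$ guarantees that it dominates the Mori fibre space base, which is what converts fibrewise rational connectedness (from adjunction and \cite{Zhang06}) together with rational connectedness of the base into rational connectedness of $\lfloor D\rfloor$ itself.
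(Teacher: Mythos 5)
Your proposal is correct and follows essentially the same route as the paper's proof: run the perturbed MMP from the proof of Theorem \ref{hypersurfaceuniruledanydimension} to reach a Mori fibre space, observe that bigness of $\lfloor D\rfloor$ prevents its contraction and forces it to dominate the base, and then combine the Koll\'ar--Shokurov connectedness lemma, plt adjunction with \cite[Theorem 1]{Zhang06} on general fibres, and Graber--Harris--Starr. The only cosmetic differences are that the paper proves non-contractedness via the negativity lemma (its Lemma \ref{notcontractedtocurve}) where you use an Iitaka-dimension argument, uses a Stein factorization where you use plt-irreducibility of $\lfloor D\rfloor\cap F$, and treats ${\rm dim}(Y)=n-1$ as a separate case (which your unified argument also covers, since there the connected finite fibre $\lfloor D\rfloor\cap F$ is a single point).
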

\begin{lemma}\label{notcontractedtocurve}
Let $(X,D)$ be a $\mathbb{Q}$-factorial pair where $D$ is a big prime divisor. Let $\pi:X\to X'$ be a divisorial contraction such that $\rho(X/X')=1$. Then ${\rm dim}(\pi(D))={\rm dim}(D)$.
\end{lemma}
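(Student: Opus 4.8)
The plan is to reduce everything to a single observation: a big prime divisor cannot be $\pi$-exceptional. Once this is known the dimension count is immediate.

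First I would fix notation. Since $\pi$ is a divisorial contraction with $\rho(X/X')=1$, it is a proper birational morphism whose exceptional locus is a prime divisor $E$, and $\pi$ restricts to an isomorphism $X\setminus E\xrightarrow{\sim}X'\setminus\pi(E)$, with $\pi(E)$ of codimension $\ge 2$ in $X'$. The desired equality $\dim\pi(D)=\dim D$ is then equivalent to $D$ \emph{not} being $\pi$-exceptional, i.e. to $D\ne E$. Indeed, if $D\ne E$ then $D\cap E$ is a proper closed subset of the irreducible divisor $D$, so $\pi$ maps the dense open subset $D\setminus E$ isomorphically into $\pi(D)$, forcing $\dim\pi(D)\ge\dim(D\setminus E)=\dim D$; the reverse inequality is automatic.

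The heart of the argument is to prove $D\ne E$, which I would do by showing that the exceptional divisor $E$ is not big, contradicting the hypothesis that $D$ is big. The key computation is that $H^0(X,\mathcal O_X(mE))=\mathbb C$ for every $m\ge 0$ with $mE$ Cartier (such $m$ exist since $X$ is $\mathbb Q$-factorial). To see this, observe that a local section of $\pi_*\mathcal O_X(mE)$ over an open $U\subseteq X'$ is a rational function $f$ with $\operatorname{div}(f)+mE\ge 0$ on $\pi^{-1}(U)$; away from the codimension $\ge 2$ locus $\pi(E)$ this $f$ is regular, so by normality of $X'$ it extends to a regular function on $U$. Hence $\pi_*\mathcal O_X(mE)=\mathcal O_{X'}$, and therefore $H^0(X,\mathcal O_X(mE))=H^0(X',\mathcal O_{X'})=H^0(X,\mathcal O_X)=\mathbb C$, using $\pi_*\mathcal O_X=\mathcal O_{X'}$ and the projectivity of $X$. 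Consequently the Iitaka dimension of $E$ is $0$; since $\dim X\ge 2$ this means $E$ is not big. As $D$ is big we conclude $D\ne E$, and the previous paragraph finishes the proof.

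The step I expect to be the main obstacle is the identification $\pi_*\mathcal O_X(mE)=\mathcal O_{X'}$, equivalently $\kappa(X,E)=0$: one must argue carefully that the only global sections of multiples of the exceptional divisor are constants, leaning on the normality of $X'$ and on $\operatorname{codim}_{X'}\pi(E)\ge 2$. An alternative I would keep in reserve is the intersection-theoretic route, where the contracted curves $C$ generate the ray $\overline{NE}(X/X')$ and cover $E$ with $E\cdot C<0$; this approach, however, becomes delicate precisely when $E$ lies in the stable base locus of its own linear system, so the cohomological computation above is the cleaner path.
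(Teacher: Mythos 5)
Your proof is correct, and it takes a genuinely different route from the paper's. The paper argues by contradiction with intersection theory: if $D$ were contracted, then either $D$ is $\pi$-nef, in which case the negativity lemma forces $D=0$, or some contracted curve $\tilde C$ has $D\cdot\tilde C<0$, while bigness of $D$ gives $D\cdot C'\ge 0$ for a general contracted curve $C'$; since $\rho(X/X')=1$ and both curves are positive against an ample divisor, they are proportional with positive ratio --- a contradiction. You instead show that exceptional divisors are never big, via $\pi_*\mathcal{O}_X(mE)=\mathcal{O}_{X'}$, hence $h^0(X,\mathcal{O}_X(mE))=1$ for all $m\ge 0$ and $\kappa(X,E)=0$; this computation (regularity away from the codimension $\ge 2$ set $\pi(E)$, then Hartogs extension by normality of $X'$) is sound. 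Two remarks on the comparison. First, the one statement you quote without proof --- that ${\rm Ex}(\pi)$ is a single prime divisor $E$ --- is standard for extremal divisorial contractions, but its usual proof is precisely the paper's argument (negativity lemma plus proportionality of contracted curves), so as written your proof hides the paper's proof inside a citation. However, you do not actually need it: if ${\rm dim}(\pi(D))<{\rm dim}(D)$, then $\pi(D)$ automatically has codimension $\ge 2$ in $X'$, and your pushforward computation applied verbatim to $D$ (using $\pi_*\mathcal{O}_X=\mathcal{O}_{X'}$) gives $h^0(X,\mathcal{O}_X(mD))=1$, contradicting bigness directly; in that form your argument uses neither $\rho(X/X')=1$ nor $\mathbb{Q}$-factoriality, and proves the stronger statement that no effective divisor contracted by a proper birational contraction of normal projective varieties is big. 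Second, your closing observation is apt: the intersection-theoretic route is delicate exactly where the paper's phrase ``$C'\cdot D\ge 0$ for a general curve $C'$ contracted by $\pi$'' glosses over the possibility that $D$ appears in the support of the effective parts of decompositions of its own multiples; your cohomological argument sidesteps that issue entirely, at the modest cost of leaving the purely numerical toolkit the paper uses elsewhere.
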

\begin{proof}
Suppose that $D$ is contracted to a lower-dimensional variety. If $D$ is $\pi$-nef then by the negativity lemma (cf. \cite[Lemma 3.39]{KM98}) we have $D=0$, which is a contradiction. If $D$ is not $\pi$-nef then there is a curve $\tilde{C}$, contracted by $\pi$, such that $\tilde{C}\cdot D<0$. We also observe that by bigness of $D$, $C'\cdot D\ge 0$ for a general curve $C'$ contracted by $\pi$. On the other hand whenever we choose a very ample divisor $H$ on $X$ we have that $C\cdot H>0$ for any curve $C$ in $X$. This is a contradiction to the assumption $\rho(X/X')=1$. So we are done.
\end{proof}
\begin{proof}[Proof of Theorem \ref{Dbig}]
We let $S:=\lfloor D\rfloor$ and $B:=\{D\}$. We do the same minimal model program as in the proof of Theorem \ref{hypersurfaceuniruledanydimension} as follows
\begin{align}\label{MMP2}
X=X_0 \overset{\text{$f_0$}}\dashrightarrow X_1\overset{\text{$f_1$}}\dashrightarrow ...\overset{\text{$f_{N-1}$}}\dashrightarrow X_N=X'\xrightarrow{g}Y,
\end{align}
and denote the strict transform of $D$, $S$ and $B$ on $X_i$ by $D_i$, $S_i$ and $B_i$ respectively. Certainly $S_i$ is big, hence it cannot be contracted. Moreover by the adjunction formula we have 
$$(K_{X_i}+D_i)|_{S_i}=K_{S_i}+{\rm Diff}_{S_i}(B_i),$$
so $(S_i,{\rm Diff}_{S_i}(B_i))$ is klt for any $i$. Therefore it could be assumed the existence of a morphism $f: X\to Y$ which is a Mori fibre space. Now since $X$ is RC then so is $Y$, and since $S$ is big it must dominate $Y$. Next we consider the following three cases. \\

\noindent\emph{Case 1.} If ${\rm dim}(Y)=0$ then $-(K_X+D)$ is ample, so by the adjunction formula we have that $-(K_S+{\rm Diff}_{S}(B))$ is ample and $(S,{\rm Diff}_{S}(B))$ is klt. Then by \cite[Theorem 1]{Zhang06} $S$ is RC. \\

\noindent\emph{Case 2.} If $1\le {\rm dim}(Y)\le n-2$ we denote a general fiber of $f$ by $F$. Then $(K_X+D)|_F=K_F+D_F$ is anti-ample. By Koll\'{a}r-Shokurov connectedness lemma (cf. \cite[Theorem 2.3.1]{Prokhorov99}) we see that $S|_F$ is connected. Now we do a Stein factorization of $f|_S$ and denote it as
$$S\xrightarrow{g}Z\xrightarrow{h}Y.$$
Since $S|_F$ is connected we know that $h$ is birational. So $Z$ is RC as RC-ness is a birational invariant (cf. \cite[Chapter IV, Proposition 3.3]{Kollar96}).

On the other hand since $-(K_X+D)$ is $f$-ample, $K_{S}+{\rm Diff}_{S}(B)$ is $f|_S$-ample, hence $g$-ample. So if we denote the fiber of $f|_S$ over a general point $z$ of $Z$ by $S_z$, by Lemma \ref{fiberklt} we know that $K_{S_z}+{\rm Diff}_{S}(B)|_{S_z}$ is klt and anti-ample. Hence $S_z$ is RC. 

Finally by \cite[Corollary 1.3]{GHS03} we know that $S$ is RC.\\

\noindent\emph{Case 3.} If ${\rm dim}(Y)=n-1$ then by the same argument as in the proof of Theorem \ref{hypersurfaceuniruledanydimension} we have that ${\rm deg}(f|_S)=1$. Moreover since $X$ is RC we know $Y$ is RC, hence $S$ is RC.
\end{proof}
Going back to Example \ref{ruledsurface}, we see that $-(K_X+D)$ being big and semiample does not imply RC-ness of components of $D$. Nevertheless we can ask what happens if we assume RC-ness of $X$ in addition. Clearly we cannot expect that every component of $D$ with coefficient $1$ is RC. For example if we take $D=g^{-1}(S)+h$ in Example \ref{P1bundleoverP2}, then $-K_X-D=-dg^*H+h$ which is big and semiample but $g^{-1}(S)$ is still not RC. However on the other hand if $(X,D)$ is dlt then by Koll\'{a}r-Shokurov connectedness lemma the union of all the components of $D$ with coefficient $1$ is connected. So we can still ask whether such locus is rationally chain connected.
\begin{question}\label{RCbigsemiample}
Let $(X,D)$ be a dlt pair where $D=\sum_iE_i+\sum_ja_jF_j$ such that $E_i$ and $F_j$ are prime divisors and $a_j\in(0,1)$. Suppose that $X$ is RC and $-(K_X+D)$ is big and semiample, then is $\bigcup_iE_i$ RCC?
\end{question}
Unfortunately we do no have an answer to \emph{Question} \ref{RCbigsemiample} in general so far. Nevertheless we are able to show that the answer is positive for certain cases of threefolds and toric varieties. 
\begin{theorem}\label{P3}
Let $(X,D)$ be 3-dimensional plt pair where $D$ a prime divisor on $X$.  Suppose that $X$ is RC and $-(K_X+D)$ is Cartier, nef and big. Then $D$ is RC.
\end{theorem}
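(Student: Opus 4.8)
The plan is to reduce to a Mori fibre space exactly as in the proof of Theorem \ref{Dbig}, and then to handle the three cases according to $\dim Y$, the extra positivity hypotheses (Cartier, nef and big instead of anti-ample) being exactly what makes the analysis delicate here. Since $(X,D)$ is plt with $D$ prime, we have $S:=\lfloor D\rfloor = D$, and $-(K_X+D)$ being big in particular means $K_X+D$ is not pseudo-effective, so I may run a $(K_X+D_A)$-MMP as in \eqref{MMP2} (after perturbing by an ample $A$ to get a klt pair) and arrive at a Mori fibre space $f:X\to Y$ over an RC base, with $D$ dominating $Y$ (by bigness) and $(D,{\rm Diff}_D(0))$ klt along the program by adjunction. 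The one thing to check carefully in this reduction is that RC-ness of $D$ is preserved (or forced) under the divisorial contractions and flips of the MMP: here Lemma \ref{notcontractedtocurve} is the relevant tool, guaranteeing that the big divisor $D$ is never contracted to something of smaller dimension, so that its strict transform remains a genuine birational model of $D$ throughout.

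First I would treat $\dim Y = 0$. Then $-(K_X+D)$ is ample on $X$, so by adjunction $-(K_D+{\rm Diff}_D(0))$ is ample and klt, and \cite[Theorem 1]{Zhang06} gives that $D$ is RC; this is identical to Case 1 of Theorem \ref{Dbig} and the Cartier/nef/big hypothesis is not needed here. Next, for $\dim Y = 1$, the base $Y$ is a rational curve and a general fibre $F$ is a (del Pezzo) surface; I would argue that $-(K_X+D)$ being nef and big and $f$-positive forces the restriction $-(K_F + D|_F)$ to be nef and big on $F$, and then combine a connectedness argument (Koll\'ar--Shokurov, as in Case 2 of Theorem \ref{Dbig}) with a Stein factorization $D\to Z\to Y$ to conclude that $Z$ is RC and the general fibres of $D\to Z$ are RC, so \cite[Corollary 1.3]{GHS03} applies. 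The subtlety in dimension three is that the fibres of $D\to Z$ are curves, so I must show these are rational — this should follow from $-(K_D+{\rm Diff}_D(0))$ being nef and big and relatively ample over $Z$.

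The hardest case, and the main obstacle, is $\dim Y = 2$, i.e.\ when $f:X\to Y$ is a conic bundle over an RC surface $Y$ and $D$ is a bisection or multisection rather than a section. In the proof of Theorem \ref{Dbig} the $f$-ampleness of $-(K_X+D)$ forced $\deg(f|_D)=1$ outright; here $-(K_X+D)$ is only nef and big, so the degree-one argument does not apply verbatim and I expect to have to exploit the Cartier hypothesis genuinely. The plan is to analyze the restriction $f|_D: D\to Y$ directly: using that $-(K_X+D)$ is Cartier, nef and big together with the conic-bundle structure to bound the degree of $f|_D$ and to control the ramification/discriminant, and then to invoke that $D$ is a normal surface with $-(K_D+{\rm Diff}_D(0))$ nef and big over the RC surface $Y$, so that a Miyaoka--Mori type covering-family argument (or the abundance-free RC criteria available for surfaces) yields RC-ness of $D$. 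I anticipate that making the Cartier hypothesis do real work — ruling out the higher-degree multisection behaviour that the ruled-surface Example \ref{ruledsurface} shows can destroy RC-ness without it — will be the technical heart of the argument.
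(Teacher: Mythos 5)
Your proposal is not a complete proof, and the gaps sit at its two load-bearing points. First, the reduction to a Mori fibre space silently imports a hypothesis of Theorem \ref{Dbig} that Theorem \ref{P3} does not have: there it is $\lfloor D\rfloor$ that is assumed big, while here only $-(K_X+D)$ is big. Your appeals to Lemma \ref{notcontractedtocurve} (``the big divisor $D$ is never contracted'') and to ``$D$ dominating $Y$ (by bigness)'' therefore have no basis; without bigness of $D$ itself, the strict transform of $D$ may be contracted by a divisorial contraction of the MMP, or may end up inside the fibres of $g:X'\to Y$, and in either case you learn nothing about rational connectedness of $D$. (This failure mode is harmless for uniruledness, as in Theorem \ref{hypersurfaceuniruledanydimension}, where a contracted divisor is uniruled by \cite[Theorem 2]{Kawamata91}, but it is fatal for RC-ness.) Relatedly, nefness of $-(K_X+D)$ is not preserved by the steps of a $(K_X+D+A)$-MMP, so the restricted-positivity claims in your cases $\dim Y=1,2$ would also need justification. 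Second, and decisively, you leave the case $\dim Y=2$ --- which you yourself identify as the heart of the matter --- as a plan (``analyze $f|_D$'', ``exploit the Cartier hypothesis'', ``a Miyaoka--Mori type covering-family argument'') rather than a proof. So even granting the reduction, the proposal has a hole exactly where the new difficulty lies.

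For comparison, the paper's proof avoids the Mori-fibre-space dichotomy altogether and is three lines: since $-(K_X+D)$ is big, $K_X+D$ is not pseudo-effective, and ${\rm rd}(X)=3\ge 2$, so Theorem \ref{hypersurfaceuniruledanydimension} already gives that $D$ is uniruled, i.e.\ birational to a ruled surface; the Cartier, nef and big hypothesis is then used exactly once, in Lemma \ref{irregularity0}, where Kawamata--Viehweg vanishing gives $H^2(X,\mathcal{O}_X(-D))=H^2(X,\mathcal{O}_X(K_X+(-K_X-D)))=0$ and hence $H^1(D,\mathcal{O}_D)=0$; finally Castelnuovo's rationality criterion (Lemma \ref{ruledtorational}, applicable because $D$ is normal with rational singularities) shows that $D$ is in fact a rational surface, in particular RC. This is where the Cartier hypothesis ``does real work'': in a vanishing theorem, not in a conic-bundle analysis. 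If you wanted to salvage your approach, the missing case $\dim Y=2$ is precisely where such a cohomological input would have to be reintroduced --- at which point the MMP scaffolding becomes unnecessary.
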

\begin{lemma} \label{ruledtorational}
Let $S$ be a normal surface with rational singularities. If $S$ is birational to a ruled surface and $H^1(S,\mathcal{O}_S)=0$ then $S$ is a rational surface.
\end{lemma}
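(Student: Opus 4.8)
The plan is to reduce the statement to a question about a smooth resolution and then invoke the classification of surfaces. The key point is that \emph{rationality of a smooth projective surface birational to a ruled surface is detected by the single invariant} $h^1(\mathcal{O})$: a ruled surface over a curve $C$ already has vanishing plurigenera, so by Castelnuovo's rationality criterion it is rational precisely when its irregularity vanishes, and the irregularity of such a surface equals $g(C)$. Thus everything hinges on transferring the hypothesis $H^1(S,\mathcal{O}_S)=0$ to a resolution of $S$.

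First I would fix a resolution $\mu:\tilde{S}\to S$. Since $S$ is normal we have $\mu_*\mathcal{O}_{\tilde{S}}=\mathcal{O}_S$, and since $S$ has rational singularities we have $R^1\mu_*\mathcal{O}_{\tilde{S}}=0$. The low-degree terms of the Leray spectral sequence for $\mu$ and the sheaf $\mathcal{O}_{\tilde{S}}$ then give an isomorphism
$$H^1(\tilde{S},\mathcal{O}_{\tilde{S}})\;\cong\;H^1(S,\mu_*\mathcal{O}_{\tilde{S}})=H^1(S,\mathcal{O}_S),$$
so the hypothesis forces $H^1(\tilde{S},\mathcal{O}_{\tilde{S}})=0$, i.e. $q(\tilde{S})=0$.

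Next, $\tilde{S}$ is a smooth projective surface birational to $S$, which by hypothesis is birational to a ruled surface over some smooth projective curve $C$; hence $\tilde{S}$ itself is birational to such a ruled surface. Irregularity and the plurigenera $P_m$ are birational invariants of smooth projective surfaces, and a ruled surface over $C$ has $P_m=0$ for all $m\ge 1$; therefore $P_2(\tilde{S})=0$. Combined with $q(\tilde{S})=0$ from the previous step, Castelnuovo's rationality criterion gives that $\tilde{S}$ is rational, and since rationality is a birational invariant we conclude that $S$ is a rational surface. (Equivalently, birational invariance of irregularity gives $q(\tilde{S})=g(C)$, so $g(C)=0$, $C\cong\mathbb{P}^1$, and the ruled surface, hence $\tilde{S}$, is rational.)

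I do not expect a serious obstacle here: the only substantive step is the passage from $H^1(S,\mathcal{O}_S)$ to $H^1(\tilde{S},\mathcal{O}_{\tilde{S}})$, which is exactly where the rational-singularities hypothesis is used, and the remainder is a direct appeal to the surface classification. The one point to state carefully is the identification $R^1\mu_*\mathcal{O}_{\tilde{S}}=0$ as the definition of rational singularities and the vanishing of $P_2$ for surfaces of ruled (equivalently negative Kodaira dimension) type.
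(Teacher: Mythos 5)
Your proposal is correct and follows essentially the same route as the paper: resolve $S$, use rational singularities to transfer the vanishing of $H^1(\mathcal{O})$ to the resolution, note that birationality to a ruled surface kills $P_2$, and conclude by Castelnuovo's rationality criterion. If anything, your explicit use of the Leray spectral sequence with $R^1\mu_*\mathcal{O}_{\tilde{S}}=0$ is slightly more carefully justified than the paper's corresponding chain of equalities.
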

\begin{proof}
We do a resolution $f:S'\to S$ for $S$. Since $S$ is birational to a ruled surface, so is $S'$. In particular $H^0(S',\mathcal{O}_{S'}(2K_{S'}))=0$. On the other hand we have 
\begin{align*}
h^1(S',\mathcal{O}_{S'})=h^1(S',f^*\mathcal{O}_{S})=h^1(S,f_*f^*\mathcal{O}_{S})=h^1(S,\mathcal{O}_S)=0,
\end{align*}
where the second equality is by the assumption that $S$ has rational singularities. So by a theorem of Castelnuovo (cf. \cite[Theorem V.1]{Beauville96}) we know that $S$ is a rational surface.
\end{proof}
\begin{lemma}\label{irregularity0}
Let $(X,D)$ be a plt pair where $X$ has dimension $n\ge 2$ and $D$ prime divisor on $X$. Suppose that $X$ is RC and $-(K_X+D)$ is Cartier, nef and big. Then $H^1(D,\mathcal{O}_D)=0$.
\end{lemma}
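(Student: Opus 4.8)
The plan is to reduce the vanishing of $H^1(D,\mathcal{O}_D)$ to two cohomology vanishing statements on the ambient variety $X$: one coming from rational connectedness, and one from Kawamata--Viehweg vanishing.

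First I would extract the structural consequences of the hypotheses. Since $(X,D)$ is plt, the pair $(X,0)$ is klt (discrepancies only increase when the boundary is decreased), so $X$ has klt, hence rational and Cohen--Macaulay, singularities, and $K_X$ is $\mathbb{Q}$-Cartier. Using rational connectedness, I pick a resolution $\mu\colon\tilde{X}\to X$; because $X$ has rational singularities we have $\mu_*\mathcal{O}_{\tilde{X}}=\mathcal{O}_X$ and $R^i\mu_*\mathcal{O}_{\tilde{X}}=0$ for $i>0$, so the Leray spectral sequence gives $H^j(X,\mathcal{O}_X)\cong H^j(\tilde{X},\mathcal{O}_{\tilde{X}})$. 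As rational connectedness is a birational invariant of smooth proper varieties (cf. \cite[Chapter IV, Proposition 3.3]{Kollar96}), $\tilde{X}$ is smooth and RC, whence $H^j(\tilde{X},\mathcal{O}_{\tilde{X}})=0$ for all $j>0$. In particular $H^1(X,\mathcal{O}_X)=H^2(X,\mathcal{O}_X)=0$.

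Next I would use the ideal sheaf sequence of the prime divisor $D$. On the normal variety $X$ the reflexive sheaf $\mathcal{O}_X(-D)$ is exactly the ideal sheaf of $D$, giving $0\to\mathcal{O}_X(-D)\to\mathcal{O}_X\to\mathcal{O}_D\to 0$. Taking cohomology and feeding in $H^1(X,\mathcal{O}_X)=H^2(X,\mathcal{O}_X)=0$ produces an isomorphism $H^1(D,\mathcal{O}_D)\cong H^2(X,\mathcal{O}_X(-D))$. Thus it remains to show $H^2(X,\mathcal{O}_X(-D))=0$. For this I would apply Kawamata--Viehweg vanishing to the integral Weil divisor $N:=-D$. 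It is $\mathbb{Q}$-Cartier, since $K_X$ is $\mathbb{Q}$-Cartier and $K_X+D$ is Cartier (being the negative of the Cartier divisor $-(K_X+D)$), so $D$, and hence $-D$, is $\mathbb{Q}$-Cartier. Moreover $N-K_X=-(K_X+D)$ is nef and big by hypothesis, and $(X,0)$ is klt. The klt form of Kawamata--Viehweg vanishing then gives $H^i(X,\mathcal{O}_X(-D))=0$ for every $i>0$, in particular $H^2(X,\mathcal{O}_X(-D))=0$. Combined with the previous step this yields $H^1(D,\mathcal{O}_D)=0$.

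The step I expect to be the crux is the vanishing $H^2(X,\mathcal{O}_X(-D))=0$: one must invoke the correct singular version of Kawamata--Viehweg vanishing for the merely $\mathbb{Q}$-Cartier (and a priori non-Cartier) Weil divisor $-D$ on the klt variety $X$, and verify that the $\mathbb{Q}$-Cartier and nef-and-big hypotheses are genuinely met. Everything else --- the identification of $\mathcal{O}_X(-D)$ with the ideal sheaf of $D$, and the vanishing $H^i(X,\mathcal{O}_X)=0$ via rational singularities together with RC-ness --- is formal once $X$ is known to have rational singularities.
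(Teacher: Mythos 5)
Your proof is correct and follows essentially the same route as the paper: both arguments reduce the claim to the two vanishings $H^1(X,\mathcal{O}_X)=0$ (klt plus rationally connected, via a resolution) and $H^2(X,\mathcal{O}_X(-D))=0$ (Kawamata--Viehweg, writing $-D=K_X+(-(K_X+D))$ with $-(K_X+D)$ nef and big). The only difference is presentational: you run the ideal-sheaf sequence $0\to\mathcal{O}_X(-D)\to\mathcal{O}_X\to\mathcal{O}_D\to 0$ directly, whereas the paper uses its Serre dual $0\to\mathcal{O}_X(K_X)\to\mathcal{O}_X(K_X+D)\to\mathcal{O}_D(K_D)\to 0$ and converts each term by Serre duality, so your variant reaches the same two vanishings while sidestepping duality on the possibly singular $X$ and $D$.
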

\begin{proof}
We have the following short exact sequence
$$0\to\mathcal{O}_X(K_X)\to\mathcal{O}_X(K_X+D)\to\mathcal{O}_D(K_D)\to 0$$
which yields the following long exact sequence
$$...\to H^{n-2}(X,\mathcal{O}_X(K_X+D))\to H^{n-2}(D,\mathcal{O}_D(K_D))\to H^{n-1}(X,\mathcal{O}_X(K_X))\to...$$
Since $X$ is klt and RC we know $H^{n-1}(X,\mathcal{O}_X(K_X))=H^1(X,\mathcal{O}_X)=0$. By Kawamata-Viehweg vanishing we also have 
$$H^{n-2}(X,\mathcal{O}_X(K_X+D))=H^2(X,\mathcal{O}_X(-D))=H^2(X,\mathcal{O}_X(K_X+(-K_X-D)))=0$$
as $-K_X-D$ is nef and big by assumption. So we get
$$H^{n-2}(D,\mathcal{O}_D(K_D))=H^1(D,\mathcal{O}_D)=0.$$
\end{proof}
\begin{proof}[Proof of Proposition \ref{P3}]
By Lemma \ref{irregularity0} we have $H^1(D,\mathcal{O}_D)=0$. On the other hand by Theorem \ref{hypersurfaceuniruledanydimension} we know that $D$ is birational to a ruled surface. So by Lemma \ref{ruledtorational} we are done.
\end{proof}
Before showing the result for toric varieties we present the following proposition, which we hope to be of independent interest.
\begin{proposition}\label{RCChypersurface}\label{P2}
Suppose that we have a pair $(X,D)$ where $D=\sum_iE_i+\sum_ja_jF_j$ such that $E_i$ and $F_j$ are prime divisors and $a_j\in(0,1)$. Suppose that $(X,D)$ is dlt, $-(K_X+D)$ is big and semiample, and there is no lc center (or equivalently, non-klt center) of $(X,D)$ that is contained in $\mathbf{B}_+(-(K_X+D))$. Then $E_i$ is RC for any $i$.
\end{proposition}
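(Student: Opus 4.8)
The plan is to use the semiample, big divisor $L:=-(K_X+D)$ to reduce the whole problem to the \emph{ample} case, where a boundary perturbation together with \cite[Theorem 1]{Zhang06} finishes things off. Since $L$ is semiample and big, for $m\gg 0$ the system $|mL|$ is base point free and defines a contraction $\phi\colon X\to W$ onto a normal projective variety with $L\sim_{\mathbb{Q}}\phi^{*}A$ for an ample $\mathbb{Q}$-divisor $A$; bigness forces $\dim W=\dim X$, so $\phi$ is birational. It is standard that for such $L$ one has $\mathbf{B}_+(L)=\mathrm{Exc}(\phi)$, the union of the positive-dimensional fibres of $\phi$. Hence the hypothesis that no lc center of $(X,D)$ lies in $\mathbf{B}_+(L)$ says exactly that $\phi$ is an isomorphism near every lc center; in particular each $E_i$ is not contracted, so $\phi|_{E_i}\colon E_i\to\bar E_i:=\phi(E_i)$ is birational and $E_i$ is RC if and only if $\bar E_i$ is RC. Thus it suffices to show that each $\bar E_i$ is RC.

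Next I would descend the pair. Setting $\Delta_W:=\phi_{*}D$, the relation $L\sim_{\mathbb{Q}}\phi^{*}A$ gives $K_X+D\sim_{\mathbb{Q}}\phi^{*}(K_W+\Delta_W)$ with $-(K_W+\Delta_W)\sim_{\mathbb{Q}}A$ ample, so $(X,D)\to(W,\Delta_W)$ is crepant. The crucial claim is that $(W,\Delta_W)$ is again dlt: over $W\setminus\phi(\mathrm{Exc}(\phi))$ it is isomorphic to the dlt pair $(X,D)$, while by the $\mathbf{B}_+$ hypothesis the locus $\phi(\mathrm{Exc}(\phi))$ contains no lc center of $(W,\Delta_W)$, and the standard characterization of dlt singularities then applies. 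Because $E_i$ is not contracted, $\bar E_i$ is a component of $\lfloor\Delta_W\rfloor$, and dlt adjunction (as in the formula used in Lemma \ref{RCCsurface}) shows that $\bar E_i$ is normal and that $(\bar E_i,\Theta_i)$ is dlt, where $\Theta_i:={\rm Diff}_{\bar E_i}(\Delta_W-\bar E_i)$ and
$$K_{\bar E_i}+\Theta_i=(K_W+\Delta_W)|_{\bar E_i};$$
in particular $-(K_{\bar E_i}+\Theta_i)=A|_{\bar E_i}$ is ample. Now I would perform the standard reduction of a dlt log Fano to a klt one: after arranging $\mathbb{Q}$-factoriality so that $\lfloor\Theta_i\rfloor$ is $\mathbb{Q}$-Cartier, the divisor $\Theta_i':=\Theta_i-\epsilon\lfloor\Theta_i\rfloor$ is a boundary with $\lfloor\Theta_i'\rfloor=0$, so $(\bar E_i,\Theta_i')$ is klt for small $\epsilon>0$; since $-(K_{\bar E_i}+\Theta_i')=A|_{\bar E_i}+\epsilon\lfloor\Theta_i\rfloor$ and ampleness is an open condition, this stays ample for $\epsilon$ small. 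Then \cite[Theorem 1]{Zhang06} yields that $\bar E_i$, hence $E_i$, is RC.

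I expect the main obstacle to be the descent step, namely verifying that $(W,\Delta_W)$ remains dlt; this is precisely where the hypothesis on $\mathbf{B}_+(-(K_X+D))$ is indispensable, and the argument collapses without it. This matches Example \ref{ruledsurface}: there $-(K_X+C_0)\cdot C_0=0$, so the lc center $C_0$ is contracted by the semiample fibration and therefore lies in $\mathbf{B}_+$ — exactly the situation our hypothesis forbids — and indeed $C_0$ fails to be RC. The remaining points (the identity $\mathbf{B}_+(L)=\mathrm{Exc}(\phi)$, the mild $\mathbb{Q}$-factoriality bookkeeping needed to make the perturbation $\Theta_i\rightsquigarrow\Theta_i'$ legitimate, and the fact that $A|_{\bar E_i}$ stays ample after the perturbation) I regard as routine and would treat only briefly.
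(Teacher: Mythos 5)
Your proposal reaches the result by a genuinely different route than the paper. The paper never leaves $X$: using the definition of $\mathbf{B}_+(-(K_X+D))$ it produces a decomposition $-(K_X+D)\sim_{\mathbb{Q}}A+G$ with $A$ ample and $G$ effective containing no lc center, takes in addition a general member $H$ of the base-point-free $\mathbb{Q}$-linear system $|-(K_X+D)|_{\mathbb{Q}}$, and perturbs the boundary: $(X,D+(1-\epsilon)H+\epsilon G)$ is still dlt, its coefficient-one part is still $\sum_i E_i$, and its log canonical divisor is $\sim_{\mathbb{Q}}-\epsilon A$, anti-ample; adjunction to $E_i$ and Lemma \ref{dltRC} then conclude. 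You instead pass to the ample model $\phi\colon X\to W$ of the semiample big divisor, identify $\mathbf{B}_+(-(K_X+D))$ with $\mathrm{Exc}(\phi)$, descend the pair crepantly to $(W,\Delta_W)$, argue that it is still dlt, and apply adjunction and \cite[Theorem 1]{Zhang06} downstairs. Both arguments spend the $\mathbf{B}_+$ hypothesis at the same conceptual point --- keeping the relevant pair dlt after trading "big and semiample" for "ample" --- but yours buys a transparent geometric reading of the hypothesis (no lc center is contracted by $\phi$), while the paper's avoids any descent of singularities at the cost of the simultaneous choice of $\epsilon$, $H$, $G$.

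Two steps in your write-up need repair; both are fixable. The more serious one is the final perturbation $\Theta_i'=\Theta_i-\epsilon\lfloor\Theta_i\rfloor$: this needs $\lfloor\Theta_i\rfloor$ to be $\mathbb{Q}$-Cartier, and "arranging $\mathbb{Q}$-factoriality" of $\bar E_i$ is not harmless, since a small $\mathbb{Q}$-factorialization replaces the anti-ample divisor $-(K_{\bar E_i}+\Theta_i)$ by a merely nef and big pullback --- which is precisely the situation the proposition is designed to get around (cf.\ Example \ref{ruledsurface}). The correct fix is to perturb by a general ample divisor as in \cite[Proposition 2.43]{KM98} rather than by $\lfloor\Theta_i\rfloor$, i.e.\ to invoke Lemma \ref{dltRC} (dlt plus anti-ample implies RC), which exists in the paper for exactly this purpose and needs no $\mathbb{Q}$-factoriality. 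Second, the "crucial claim" that $(W,\Delta_W)$ is dlt is asserted with a one-line reason; a complete argument needs: (i) the honest equality $K_X+D=\phi^*(K_W+\Delta_W)$, not just $\mathbb{Q}$-linear equivalence, which follows from the negativity lemma because the difference is $\phi$-exceptional and $\mathbb{Q}$-linearly trivial; (ii) that the lc centers of $(W,\Delta_W)$ are exactly the $\phi$-images of the lc centers of $(X,D)$ (crepant-ness); and (iii) that if an lc center $V$ of $(X,D)$ is contained neither in $\mathrm{Exc}(\phi)$ nor in the non-snc locus $Z$ of $(X,D)$, then $\phi(V)\not\subseteq\phi(\mathrm{Exc}(\phi)\cup Z)$ --- this uses that fibers of $\phi$ through points outside $\mathrm{Exc}(\phi)$ are single points, by Zariski's main theorem. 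In particular the closed set in Szab\'o's characterization must be $\phi(\mathrm{Exc}(\phi)\cup Z)$, not just $\phi(\mathrm{Exc}(\phi))$ as you wrote. Finally, the identity $\mathbf{B}_+(-(K_X+D))=\mathrm{Exc}(\phi)$ is indeed standard but deserves a citation or the short direct argument (curves contracted by $\phi$ meet every decomposition's effective part negatively, giving one inclusion; an effective divisor $E$ with $-E$ $\phi$-ample and $\mathrm{Supp}(E)=\mathrm{Exc}(\phi)$ gives the other). With these repairs your argument is complete.
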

To prove this we need the following lemma which is a slight modification of \cite[Theorem 1]{Zhang06}. 
\begin{lemma}\label{dltRC}
Let $(X,D)$ be a dlt pair and suppose that $-(K_X+D)$ is ample. Then $X$ is RC. 
\end{lemma}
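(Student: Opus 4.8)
The plan is to deduce this \emph{dlt} statement from the \emph{klt} criterion of Zhang \cite[Theorem 1]{Zhang06} by a small perturbation of the boundary that lowers the coefficients of $\lfloor D\rfloor$ to strictly below $1$, while preserving the anti-ampleness of $K_X+D$. Concretely, I would set $S:=\lfloor D\rfloor$ and, for a small rational number $\epsilon\in(0,1)$, consider the $\mathbb{Q}$-divisor $D_\epsilon:=D-\epsilon S$. Lowering the coefficients of the components of $S$ from $1$ to $1-\epsilon$ produces a boundary with $\lfloor D_\epsilon\rfloor=0$, and since decreasing a boundary can only increase discrepancies, $(X,D_\epsilon)$ is klt; this is the standard passage from dlt to klt (cf. \cite[Proposition 2.43]{KM98}). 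For $D_\epsilon$ to be an honest boundary I need $S$ to be $\mathbb{Q}$-Cartier, which holds whenever $X$ is $\mathbb{Q}$-factorial, the setting in which the lemma is applied.

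Next I would verify that the required positivity survives the perturbation. Writing $A:=-(K_X+D)$, which is ample by hypothesis, we have
$$-(K_X+D_\epsilon)=-(K_X+D)+\epsilon S=A+\epsilon S.$$
Since $A$ lies in the interior of the nef cone and $\epsilon S\to 0$ in $N^1(X)_{\mathbb{R}}$ as $\epsilon\to 0$, openness of the ample cone guarantees that $A+\epsilon S$ is again ample for all sufficiently small $\epsilon$. Thus $(X,D_\epsilon)$ is a klt pair with $-(K_X+D_\epsilon)$ ample, and \cite[Theorem 1]{Zhang06} gives that $X$ is rationally connected, as desired (rational connectedness of $X$ being what we want, not of any component of $D$).

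The substance of the argument is almost entirely bookkeeping, and I expect the main (though minor) obstacle to be the verification that the perturbed pair is simultaneously a klt boundary \emph{and} keeps $-(K_X+D)$ positive at the level needed to invoke Zhang's theorem. Two points deserve attention. First, the hypothesis that $-(K_X+D)$ is genuinely \emph{ample}, rather than merely nef and big, is exactly what makes the positivity step clean: it is openness of the ample cone that is used, whereas if we only had nef and big, adding the effective divisor $\epsilon S$ could destroy nefness along curves on which $S$ is negative, and the perturbation would break down. This is why I would avoid reducing the general case to the $\mathbb{Q}$-factorial one by passing to a small $\mathbb{Q}$-factorialization, as that would replace $-(K_X+D)$ by its pullback, which is only nef and big. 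Second, one should record that $(X,D_\epsilon)$ is indeed klt and not merely sub-klt, which follows because $D_\epsilon\ge 0$ and all its coefficients lie in $[0,1)$; rational connectedness is then read off directly, with no further appeal to the birational invariance statement of \cite[Chapter IV, Proposition 3.3]{Kollar96}.
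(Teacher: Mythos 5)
Your overall strategy---perturb the dlt pair to a klt pair while keeping the anti-log-canonical divisor ample, then quote \cite[Theorem 1]{Zhang06}---is exactly the paper's strategy, and your argument is sound in the $\mathbb{Q}$-factorial case. But there is a genuine gap: your perturbation $D_\epsilon=D-\epsilon\lfloor D\rfloor$ is only defined as a pair when $\lfloor D\rfloor$ (equivalently $K_X+D_\epsilon$) is $\mathbb{Q}$-Cartier, and the lemma assumes no such thing. Your fallback claim---that $\mathbb{Q}$-factoriality ``holds in the setting in which the lemma is applied''---is false for this paper: Lemma \ref{dltRC} is invoked in the proof of Proposition \ref{RCChypersurface}, applied to the pairs $(E_i,D_{E_i})$ obtained by adjunction on a component $E_i$ of $\lfloor D\rfloor$. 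Neither the ambient variety in that proposition nor $E_i$ itself is assumed $\mathbb{Q}$-factorial, and $\mathbb{Q}$-factoriality is not inherited by divisors, so it cannot be arranged there. Since you also (correctly) observe that passing to a small $\mathbb{Q}$-factorialization turns ``ample'' into ``nef and big'' and thus breaks the argument, your proof cannot be repaired within its own framework: as written it establishes a strictly weaker statement than the lemma, and one that does not suffice for the use the paper makes of it.

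The repair is the perturbation the paper actually performs, which is the content of \cite[Proposition 2.43]{KM98} (your citation of that proposition for ``lowering coefficients of a dlt boundary gives a klt pair'' is a miscitation; that fact is monotonicity of discrepancies plus the snc computation on the open locus, and it is not the role 2.43 plays here). Namely, perturb within the $\mathbb{Q}$-linear equivalence class using a small fraction of the ample divisor $-(K_X+D)$ itself: there exists $D'\sim_{\mathbb{Q}}D+\epsilon\bigl(-(K_X+D)\bigr)$ with $(X,D')$ klt, and then $-(K_X+D')\sim_{\mathbb{Q}}(1-\epsilon)\bigl(-(K_X+D)\bigr)$ is still ample. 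This uses only that $K_X+D$ is $\mathbb{Q}$-Cartier, which is part of the definition of a pair, so it proves the lemma in its stated generality; Zhang's theorem then applies exactly as in your last step.
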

\begin{proof}
By \cite[Proposition 2.43]{KM98} we can perturb $D$ such that $(X,D)$ is klt and $-(K_X+D)$ still stays ample. So by \cite[Theorem 1]{Zhang06} we are done.
\end{proof}
\begin{proof}[Proof of Proposition \ref{RCChypersurface}]
By assumption there exists an effective $\mathbb{Q}$-divisor $H$ such that $H\sim_{\mathbb{Q}}-(K_X+D)$ and $H\sim_{\mathbb{Q}}A+G$, where $A$ is ample and $G$ is effective. We have
$$0\sim_{\mathbb{Q}}K_X+D+H\sim_{\mathbb{Q}}K_X+D+(1-\epsilon)H+\epsilon(A+G).$$ 
Moreover we can arrange $\epsilon$, $H$ and $G$ such that 
\begin{itemize}
\item $E_j\not\subseteq {\rm Supp}(G)$ for any $j$.
\item $(X, D+(1-\epsilon)H+\epsilon G)$ is dlt and the only components of $D+(1-\epsilon)H+\epsilon G$ with coefficient 1 are the $E_j$.
\end{itemize}
Now by adjunction there exists an effective $\mathbb{Q}$-divisor $D_{E_i}$ such that 
$$(K_X+D+(1-\epsilon)H+\epsilon G)|_{E_i}\sim_{\mathbb{Q}}K_{E_i}+D_{E_i}$$
and $(K_{E_i},D_{E_i})$ is dlt. By construction we have $K_X+D+(1-\epsilon)H+\epsilon G\sim_{\rm lin}-\epsilon A$ is anti-ample, so $K_{E_i}+D_{E_i}$ is anti-ample as well. Then by Lemma \ref{dltRC}, $E_i$ is RC.
\end{proof}
\begin{theorem}\label{P4}
Let $(X,D)$ be a plt pair where $X$ is a toric variety. Suppose that $-(K_X+D)$ is big and semiample. Then $\lfloor D\rfloor$ is RC.
\end{theorem}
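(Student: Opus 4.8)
The plan is to reduce the theorem to Proposition \ref{RCChypersurface}. Since $(X,D)$ is plt it is in particular dlt, so Proposition \ref{RCChypersurface} is applicable once its base-locus hypothesis is checked. Writing $E:=\lfloor D\rfloor$, the plt assumption guarantees that $E$ is the \emph{unique} lc center of $(X,D)$: a plt pair has no lc center of codimension $\ge 2$, so the non-klt locus is exactly $E$, a normal prime divisor (consistent with the running hypothesis $\lfloor D\rfloor=E$). Consequently the condition ``no lc center is contained in $\mathbf{B}_+(-(K_X+D))$'' reduces to the single assertion $E\not\subseteq \mathbf{B}_+(-(K_X+D))$, and once this is established Proposition \ref{RCChypersurface} immediately gives that $E$ is RC.

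I would split into two cases according to whether $E$ is torus-invariant, disposing of the invariant case first with no positivity input. If $E=V(\rho)$ is the orbit closure of a ray $\rho$, then $E$ is itself a projective toric variety for the quotient torus, hence rational, hence RC; in this situation there is nothing to prove and the $\mathbf{B}_+$ hypothesis is irrelevant. The substantive case is when $E$ is not torus-invariant, where the plan is to verify the base-locus hypothesis using the combinatorics of $X$.

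In the non-invariant case the key points are: (i) because $-(K_X+D)$ is big, $\mathbf{B}_+(-(K_X+D))$ is a proper closed subset of $X$; (ii) $\mathbf{B}_+$ depends only on the numerical class of the divisor, and on a toric variety every class contains a torus-invariant $\mathbb{Q}$-divisor representative, so $\mathbf{B}_+(-(K_X+D))$ may be computed from a torus-invariant representative and is therefore a torus-invariant proper closed subset; (iii) a torus-invariant proper closed subset is a union of orbit closures, so its $(n-1)$-dimensional irreducible components are exactly the torus-invariant prime divisors $V(\rho)=D_\rho$. Now if the non-invariant prime divisor $E$ were contained in $\mathbf{B}_+(-(K_X+D))$, then being irreducible of dimension $n-1$ it would itself be an $(n-1)$-dimensional irreducible component of that set, forcing $E=D_\rho$ for some $\rho$ and contradicting non-invariance. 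Hence $E\not\subseteq \mathbf{B}_+(-(K_X+D))$, and Proposition \ref{RCChypersurface} applies to conclude that $E=\lfloor D\rfloor$ is RC.

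The part requiring the most care is steps (ii)--(iii): establishing that the augmented base locus can be taken torus-invariant and identifying the structure of its divisorial part. This rests on the numerical invariance of $\mathbf{B}_+$ (Ein--Lazarsfeld--Musta\cb{t}\u{a}--Nakamaye--Popa) together with the fact that the stable base locus of a torus-invariant divisor is a union of torus-invariant subvarieties. Once these are in place, the geometric dichotomy---invariant $E$ is directly rational, non-invariant $E$ automatically avoids $\mathbf{B}_+$---closes the argument, with everything else being a direct appeal to Proposition \ref{RCChypersurface}.
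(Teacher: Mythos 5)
Your proof is correct, and its skeleton coincides with the paper's: reduce to Proposition \ref{RCChypersurface} by noting that for a plt pair the only lc center is $E=\lfloor D\rfloor$, dispose of the torus-invariant case directly ($E$ is then itself a toric variety, hence rational, hence RC), and in the non-invariant case verify $E\not\subseteq\mathbf{B}_+(-(K_X+D))$. The two arguments diverge only in how this last non-containment is established. The paper moves $E$ itself: by \cite[Lemma 15.1.8]{CLS11}, a non-invariant prime divisor on a toric variety is $\mathbb{Q}$-linearly equivalent to a nonnegative combination of Cartier torus-invariant divisors, so in any decomposition $-(K_X+D)\sim_{\mathbb{Q}}A+G$ with $A$ ample and $G\ge 0$ one may substitute away any multiple of $E$ occurring in $G$, producing a decomposition whose support misses $E$, whence $E\not\subseteq\mathbf{B}_+(-(K_X+D))$. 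You instead leave $E$ alone and show the whole set $\mathbf{B}_+(-(K_X+D))$ is torus-invariant, combining the numerical invariance of $\mathbf{B}_+$ with an invariant representative of the class of $-(K_X+D)$ and the equivariance of base loci of invariant linear systems, and then observe that a non-invariant prime divisor would have to be an irreducible component of this proper closed invariant set, a contradiction. Both routes are sound and of comparable length; yours trades the specific toric citation for general facts about asymptotic base loci plus standard equivariance, which is more self-contained and yields the slightly stronger statement that $\mathbf{B}_+(-(K_X+D))$ is a union of orbit closures, while the paper's is a one-line application of a quotable lemma. One shared caveat: both proofs tacitly treat $\lfloor D\rfloor$ as prime (consistent with the setup of Theorem B); if $\lfloor D\rfloor$ were reducible, the hypothesis of Proposition \ref{RCChypersurface} would need to be checked for all lc centers simultaneously, since an invariant component lying inside $\mathbf{B}_+(-(K_X+D))$ would block the application of that proposition to the remaining components.
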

\begin{proof}
If $\lfloor D\rfloor$ is toric invariant then we are done. If not then by \cite[Lemma 15.1.8]{CLS11} we know that $\lfloor D\rfloor$ is $\mathbb{Q}$-linearly equivalent to a linear combination of Cartier toric invariant divisors with nonnegative coefficients, in particular $\lfloor D\rfloor\not\subseteq\mathbf{B}_+(-(K_X+D))$. So by Proposition \ref{P2} we are done.
\end{proof}
\bibliographystyle{alpha}
\bibliography{P}  
\end{document}